 \newcommand{\ds}{\displaystyle}
\newtheorem{theo}{Theorem}[section]
\newtheorem{lem}{Lemma}[section]
\newtheorem{corol}{Corollary}[section]
\newtheorem{prop}{Proposition}[section]
\newtheorem{remark}{Remark}[section]
\newtheorem{asu}{Assumption}
\numberwithin{equation}{section} \numberwithin{prop}{section}
\numberwithin{corol}{section} \numberwithin{theo}{section}
\numberwithin{lem}{section}
\begin{document}
\title{On the joint asymptotic distribution of the restricted estimators in multivariate regression model}
\date{}
\author{S\'ev\'erien Nkurunziza{\thanks {University of Windsor, department of Mathematics and Statistics, 401 Sunset
Avenue, Windsor, Ontario, N9B 3P4. Email: severien@uwindsor.ca}}
\and and \,\quad{ } Youzhi Yu {\thanks {University of Windsor, 401
Sunset Avenue, Windsor, Ontario, N9B 3P4. Email: yu13e@uwindsor.ca}}
} \thispagestyle{empty} \selectlanguage{english} \maketitle
\maketitle\thispagestyle{empty}
\begin{abstract} The main Theorem of Jain~{\em et al.}[Jain, K., Singh, S., and Sharma,
S.~(2011), Restricted estimation in multivariate measurement error
regression model; JMVA, {\bf 102}, 2, 264--280] is established in
its full generality.  Namely, we derive the joint asymptotic
normality of the unrestricted estimator~(UE) and the restricted
estimators of the matrix of the regression coefficients. The derived
result holds under the hypothesized restriction as well as under the
sequence of alternative restrictions. In addition, we establish
Asymptotic Distributional Risk  for the estimators and compare their
relative performance. It is established that near the restriction,
the restricted estimators~(REs) perform better than the UE. But the
REs perform worse than the unrestricted estimator when one moves far
away from the restriction. %Finally, in order to illustrate the
%application of the derived results, in small and moderate samples
%sizes, we present some simulations. The numerical findings
%corroborate the established theoretical results.
\end{abstract}
\noindent {\it Keywords:} ADR; Asymptotic normality;  Measurement
error; Multivariate regression model;  Restricted estimator;
Unrestricted estimator.
 \pagenumbering{arabic} \addtocounter{page}{0}

\section{Introduction}\label{sec:introd}

In this paper, we are interested in an estimation problem in
multivariate ultrastructural measurement error model with more than
one response variable.  In particular, as in Jain~{\em et
al.}~(2011), we consider the case where the regression coefficients
may satisfy some linear restriction. It is practical to use such
models in the real world if there is at least two correlated
response variables. For example, in the field of medical sciences
(see Dolby~,~1976), more than one body index is often recorded and
the interest is to relate these measurements to the amount of
different nutrients in the daily diet. Similarly, as described in
Bertsch~{\em et al.}~(1974), in the air pollution studies , the
observed chemical elements contained in the polluted air are lead,
thorium and Uranium etc. It is highly likely that the variables
involved in the study may possess some measurement errors. Following
Mardia~(1980), multivariate regression is applicable in a wide range
of situations, such as Economics (see Meeusen,~1997) and Biology
(see Mcardle,~1988). We also refer to Stevens (2012) for a
discussion about the importance of regression models in education
and social-sciences.

In this paper, we derive the asymptotic properties of the
unrestricted and the restricted estimators of the regression
coefficients in the multivariate regression models with measurement
errors, when the coefficients satisfy some restrictions. To give a
close reference, we quote Jain {\em et al.}~(2011) who derived the
unrestricted and three restricted estimators for the regression
coefficients, and derived a theorem~(see Jain~{\em et al.},
Theorem~4.1) which gives the marginal asymptotic distributions of
the estimators under the restriction.

To summarize the contribution of this paper, we generalize
Theorem~4.1 of Jain {\em et al.}~(2011) in three ways. First, we
derive the joint asymptotic distribution of the unrestricted
estimator and any member of the class of the restricted estimators
under the restriction. Second, we derive the joint asymptotic
distribution of the unrestricted estimator and any member of the
class of the restricted estimators under the sequence of local
alternative restrictions. Third, we derive the joint asymptotic
distribution between the UE and all three restricted estimators
given in Jain~{\em et al.}~(2011), under the restriction and under
the sequence of local alternative restrictions. In addition, we
establish the Asymptotic Distributional Risk (ADR) for the UE and
the ADR of any member of the class of restricted estimators. We also
compare the relative performance of the proposed estimators. In
particular, we prove that in the neighborhood of the restriction,
the restricted estimators dominate the unrestricted estimator. We
also prove that as one moves far away from the restriction, the
unrestricted estimator dominates the restricted estimators. Finally,
we generalize Proposition~A.10 and Corollary~A.2 in Chen and
Nkurunziza~(2016).

The rest of this paper is organized as follows.
Section~\ref{sec:preliminary} outlines some preliminary results
given in Jain {\em et al.}~(2011). %In particular, we present the
%statistical model, the unrestricted estimator and a class of the
%restricted estimators.
In Section~\ref{sec:largesample}, we present the main results of
this paper.  More specifically, in
Subsection~\ref{sec:largesampleres}, we establish the joint
asymptotic  distribution between the unrestricted estimator~(UE) and
any member of the restricted estimators under the restriction. In
Subsection~\ref{sec:largesampleloc}, we derive the joint asymptotic
distributions between all estimators under the sequence of the local
alternative restrictions.  In Subsection~\ref{sec:ADR}, we derive
ADR for the UE and restricted estimators and in
Subsection~\ref{sec:riskanalysis}, we analyse the relative
performance of the UE and the restricted estimators. %In
%Section~\ref{sec:numerical}, we present the simulation studies.
Finally, Section~\ref{sec:conclusion} gives some the concluding
remark of this paper, and for the convenience of the reader, some
technical results are given in the appendix.
\section{Model Specifications and preliminary results}\label{sec:preliminary}
In this section, we describe the multivariate regression model with
measurement error as well as the assumptions used in order to
establish the results of this paper. Following Jain~{\em et al.}~(2011),
we consider the multivariate
regression model given by $$\bm{Z}=\bm{D}\bm{B} + \bm{E},$$ where $\bm{Z}$ is a $n\times q$
matrix, $\bm{D}$ is a $n\times p$ matrix, $\bm{B}$ is $p\times q$ matrix of
the regression coefficients and $\bm{E}$ is a $n\times q$ matrix of error
terms. We assume that $\bm{Z}$ is observable but D is not observable and
can be observed only through $\bm{X}$ with additional measurement error
$\bm{\Delta}$ as
$$ \bm{X}=D+\Delta,$$
where $\bm{X}$ and $\bm{\Delta}$ are $n\times p$-random matrices. Further, we
suppose that $$\bm{D}=\bm{M}+\bm{\Psi},$$ where $\bm{M}$ is a $n\times p$-matrix of
fixed components and $\bm{\Psi}$ is a $n\times p$-matrix of random
components. We also suppose that some prior
information about the regression coefficient $\bm{B}$ is available. In
particular, for known matrices $\bm{R}_{1},\bm{R}_{2}$ and $\bm{\theta}$,
we suppose that
\begin{equation}\label{restriction}
\bm{R}_{1}\bm{B}\bm{R}_2=\bm{\theta},
\end{equation}
where $\bm{R}_{1}$ is $r_1\times p$  matrix, $\bm{R}_{2}$ is a
$q\times r_2$  matrix and $\bm{\theta}$ is $r_1\times r_2$  matrix. For
the interpretation of the restriction in (\ref{restriction}),
$\bm{R}_{1}$ imposes a linear restriction on the parameters of
individual equations while $\bm{R}_{2}$ imposes a linear restriction
across equations. For more details about the interpretation of this
restriction, we refer for example to Izenman~(2008), Jain {\em et
al.}~(2011) and the references therein. To introduce some notations, let
$Z_{(i)}=[z_{1i},z_{2i},...,z_{ni}]^{'}$, let
$E_{(i)}=[\epsilon_{i1},\epsilon_{i2},...\epsilon_{iq}]^{'}$, %and let
%$B_{(i)}=[\beta_{1i},\beta_{2i},...,\beta_{pi}]^{'}$, for
%$i=1,2,...,q$. Also
let $\bm{Z}=[Z_{(1)},Z_{(2)},...,Z_{(q)}]$, %\\
$\bm{E}=[E_{(1)},E_{(2)},...,E_{(n)}]^{'} $. % and
% $B=[B_{(1)}, B_{(2)},...,B_{(q)}]$.
Further, %for $j=1,2,...,n$, let
%$D_{(j)}=[d_{j1},d_{j2},...d_{jp}]^{'}$ and
%$D=[D_{(1)},D_{(2)},...D_{(n)}]$. Let
%$X=[X_{(1)},X_{(2)},...,X_{(n)}]^{'}$ and
let $\bm{\Delta}
=[\Delta_{(1)},\Delta_{(2)},...,\Delta_{(n)}]^{'}$ % with
%$X_{(j)}=[x_{j1},x_{j2},...,x_{jp}]^{'}$ and
and let $\Delta_{(j)}=[\delta_{j1},\delta_{j2},...,\delta_{jp}]^{'}$ for
$j=1,2,...,n$, let $\bm{M}=[M_{(1)},M_{(2)},...,M_{(n)}]^{'}$
with $M_{(j)}=[M_{j1},M_{(j2)},...,M_{(jp)}]^{'}$, and let
$\Psi_{(j)}=[\psi_{j1},\psi_{j2},...,\psi_{jp}]^{'}$, $j=1,2,...,n$. We also let
$\bm{I}_{p}$ to stand for the $p$-dimensional identity matrix.
The following assumptions are made in order to derive the proposed
estimators and their asymptotic properties. Note that these conditions are similar to that in Jain {\em et al.}~(2011).

%\textbf{Assumptions}\\
\begin{asu}
\begin{description}
\item[($\mathcal{A}_1$)] Elements of vector
$E_{(i)}=[\epsilon_{1i},\epsilon_{21},...,\epsilon_{ni}]$ are
independent with mean $0$, variance ${\sigma^2_{\epsilon}}$, third
moment $\gamma_{1\epsilon}{\sigma^3_{\epsilon}}$ and fourth moment
$(\gamma_{2\epsilon}+3){\sigma^4_{\epsilon}};$
\item[($\mathcal{A}_2$)] $\delta_{ij}$ are independent and identically
distributed random variables with mean $0$, variance
${\sigma_\delta}^2$, third moment $\gamma_{1\delta}
{\sigma_\delta}^3$ and fourth moment
$(\gamma_{2\delta}+3){\sigma_\delta}^4;$
\item[($\mathcal{A}_3$)] $\Psi_{ij}$ are independent and identically
distributed random variables with mean 0, variance
${\sigma_\varPsi}^2$, third moment
$\gamma_{1\varPsi}{\sigma_\varPsi}^3$ and fourth moment
$(\gamma_{2\varPsi}+3){\sigma_\varPsi}^4$;
\item[($\mathcal{A}_4$)] $\bm{\Delta}$,$\bm{\Psi}$, and $\bm{E}$ are mutually independent;
\item[($\mathcal{A}_5$)] $M_{(n)}\rightarrow\sigma_M$ as $n\rightarrow
\infty$ and $\sigma_M$ is finite;
\item[($\mathcal{A}_6$)] \textrm{\em Rank}($X$)=$p$, \textrm{\em Rank}($\bm{R}_{1}$)=$r_1$
and \textrm{\em Rank}($\bm{R}_{2}$)=$r_2$.
\end{description}
\end{asu}
%($\mathcal{A}_7$) Rank($\bm{R}_{1}$)=$r_1$ and Rank($\bm{R}_{2}$)=$r_2$.\\
%\\
%It should be noted that these assumption are similar to that in Jain {\em et al.}~(2011).
\subsection{ Estimation methods}\label{sec:UE}
In this subsection, we outline some results given in Jain {\em et
al.}~(2011) which are used to derive the main results of this paper.
Namely, we present the unrestricted estimator~(UE) and three
restricted estimators~(REs) of the regression coefficients. By using
the class of objective functions given in Jain~{\em et al.}~(2011),
we also present a class of the restricted estimators which includes
the three REs. For more details about the content of this
subsection, we refer to Jain~{\em et al.}~(2011).

\subsubsection{The unrestricted estimator}

%\subsection*{The least squares estimation and estimating function}
%In the context where $\bm{D}$ is observable, we would consider to
%estimate $\bm{B}$ by minimizing the objective function
%$\mathcal{G}=\text{tr}((\bm{Z}-\bm{D}\bm{B})^{'}(\bm{Z}-\bm{D}\bm{B}))$.
%However, since $\bm{D}$ is not observable, we consider to replace
%$\bm{D}$ by $X$ as a preliminary step.
As in Jain~{\em et al.}~(2011), one considers first the following
objective function\\
$\mathcal{G}_1=\text{tr}((\bm{Z}-\bm{X}\bm{B})^{'}(\bm{Z}-\bm{X}\bm{B}))$,
which leads to the least squares estimators (LSE)%. By minimizing
%this objective function, we derive the least squares estimators
%(LSE)
%Namely, by taking the derivative with respect to
%the matrix $\bm{B}$, we get the estimating function
%$\hat{f}_1(B)=-2\bm{X}^{'}\bm{Z}+2(\bm{X}^{'}\bm{X})B$, and then,
%setting $\hat{f}_1(B)=0$ and using part ($\mathcal{A}_6$) of
%Assumption~1, we get the classical least squares estimators~(LSE)
\begin{eqnarray}
\hat{\bm{B}}=(\bm{X}^{'}\bm{X})^{-1}\bm{X}^{'}\bm{Z}.\label{LSE}
\end{eqnarray}
%From parts $(\mathcal{A}_1)-(\mathcal{A}_4)$ of Assumption~1 , it
%can be verified that $\text{E}(\hat{f}_1)=2\text{E}(\Delta^{'}\Delta
%B)=2n\sigma^2_\delta B\neq 0$, unless $\bm{B}$ is a zero matrix. Thus,
%the estimating function $\hat{f}_1(\bm{B})$ is biased. In addition,
Under parts $(\mathcal{A}_1)-(\mathcal{A}_5)$ of Assumption~1, one
can verify that $\hat{\bm{B}}$ converges in probability to
$\bm{K}\bm{B}\neq \bm{B}$, where $\bm{K}=\bm{\Sigma}^{-1}
\left(\bm{\Sigma}-\sigma_{\delta}^{2}\bm{I}_{p}\right)$ with
$\bm{\Sigma}=\sigma_M\sigma^{'}_M+\sigma^2_\psi
\bm{I}_p+\sigma^2_\delta \bm{I}_p$, and thus, $\hat{\bm{B}}$ is not
a consistent estimator. % for $\bm{B}$.
%To obtain a consistent estimator for $\bm{B}$,
Because of that, as in Jain {\em et al.}~(2011), one replaces $\hat{\bm{B}}$ by %$\bm{K}^{-1}_{X}\hat{\bm{B}}$
\begin{eqnarray}
\hat{\bm{B}}_1={K^{-1}_X}\hat{\bm{B}},\label{B1hat}
\end{eqnarray}
where %$\bm{K}_{X}$ is a consistent estimator for $\bm{K}$. Here, the estimator $\bm{K}_{X}$ is taken as
$\bm{K}_{X}=\bm{\Sigma}_{X}^{-1}\bm{\Sigma}_{D}$ with
$\bm{\Sigma}_{X}=n^{-1}\,\bm{M}'\bm{M}+\sigma^{2}_{\psi}\bm{I}_{p}+\sigma^{2}_{\delta}\bm{I}_{p}$,
$\bm{\Sigma}_{D}=n^{-1}\,\bm{M}'\bm{M}+\sigma^{2}_{\psi}\bm{I}_{p}$. %
%with $\hat{\sigma}^{2}_{\delta}$ given by
%$$\hat{\sigma}^{2}_\delta=\frac{1}{n(p-1)}\sum_{i=1}^{n}\sum_{j=1}^{p}(X_{ij}-\bar{X}_{i\cdot})^{'}
%(X_{ij}-\bar{X}_{i\cdot}),$$ where
%$\bar{X}_{i\cdot}=p^{-1}\sum_{j=1}^{p}X_{ij}$. %Gleser (1992)
%  proves that $\hat{\sigma}^{2}_\delta$ is a
%consistent estimator for $\sigma^{2}_\delta$.
Further, as in Jain~{\em et al.}~(2011), one can verify that
$$\bm{\Sigma}_X\xrightarrow[n\rightarrow\infty]{p}\bm{\Sigma}, \quad{ } \mbox{ and } \quad{ } \bm{\Sigma}_D\xrightarrow[n\rightarrow\infty]{p}\bm{\Sigma}-\sigma^2_\delta \bm{I}_p \quad{ }  \mbox{ where }\, \bm{\Sigma}=[\sigma_M{\sigma^{'}_M}+{\sigma^{2}_\psi}\bm{I}_p+{\sigma_\delta}^{2}\bm{I}_p].$$%, where
As given in Jain {\em et al.}~(2011), note that the estimator
$\hat{\bm{B}}_{1}$ can be obtained directly by minimizing the
objective function
\begin{eqnarray}
\hat{\mathcal{G}}_2=\mathcal{G}_1-\text{tr}[\bm{B}^{'}(n\bm{\Sigma}_{X})(\bm{I}_p-\bm{K}_{X})\bm{B}].\label{G2hat}
\end{eqnarray}
 For more details, we refer to Jain {\em et al.}~(2011). In the
 quoted paper, the authors prove that $\bm{B}_{1}$ is a consistent
estimator for $\bm{B}$.
They also derive the following theorem which gives the asymptotic
distribution of $\sqrt{n}(\hat{\bm{B}}_{1}-\bm{B})$. To introduce
some notations, let\\
$\bar{K}_X=\left(\bm{\Sigma}_X-\bm{\Sigma}_D\right)\bm{\Sigma}^{-1}_X$,
$H=n^{-\frac{1}{2}}\bm{X}^{'}\bm{X}-n^{\frac{1}{2}}\bm{\Sigma}_X$,
$h=n^{-\frac{1}{2}}(\bm{X}^{'}[\bm{E}-\bm{\Delta}
\bm{B}])+n^{\frac{1}{2}}{\sigma_\delta}^{2}\bm{B}$,
$\bm{\Lambda}=\displaystyle\lim_{n\rightarrow
\infty}\textrm{E}\{[\text{vec}(h^{'})+\text{vec}(\bm{B}^{'}\bar{K_X}H)]{[\text{vec}(h^{'})+\text{vec}(\bm{B}^{'}\bar{K_X}H)]}^{'}\}$
and let $\bm{0}$ be a zero-matrix. The existence of this matrix is
established in Jain~{\em et al.}~(2011).

\begin{theo}\label{theorhH}
Suppose that Assumptions ($\mathcal{A}_1$)-($\mathcal{A}_6$) hold
hold, we have\\
%$$[\text{\em vec}(h^{'})+\text{\em vec}(\bm{B}^{'}\bar{K}_XH)]\xrightarrow[n\rightarrow\infty]{d}\Upsilon\sim \mathcal{N}_{qp}\left(\bm{0},\Lambda\right).$$
%Further,
$n^{\frac{1}{2}}(\hat{\bm{B}}_1-\bm{B})\xrightarrow[n\rightarrow\infty]{d}\eta_1\sim
\mathcal{N}_{p\times q}(\bm{0}, \bm{A}_1\bm{\Lambda}
 \bm{A}'_{1})$, where $\bm{A}_1=(\bm{\Sigma} \bm{K})^{-1}\otimes \bm{I}_q$.
\end{theo}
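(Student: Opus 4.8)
The plan is to establish the asymptotic normality of $n^{1/2}(\hat{\bm{B}}_1 - \bm{B})$ by first expressing this quantity in terms of the auxiliary quantities $H$ and $h$ defined just before the theorem, and then invoking a central limit theorem for the vectorized form. Starting from $\hat{\bm{B}}_1 = \bm{K}_X^{-1}(\bm{X}'\bm{X})^{-1}\bm{X}'\bm{Z}$ and the model $\bm{Z} = \bm{D}\bm{B} + \bm{E} = (\bm{X}-\bm{\Delta})\bm{B} + \bm{E}$, I would substitute to write $\bm{X}'\bm{Z} = \bm{X}'\bm{X}\bm{B} - \bm{X}'\bm{\Delta}\bm{B} + \bm{X}'\bm{E}$. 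The main algebraic task is to combine this with the $\sigma_\delta^2$ correction implicit in $\bm{K}_X = \bm{\Sigma}_X^{-1}\bm{\Sigma}_D$ (recall $\bm{\Sigma}_X - \bm{\Sigma}_D = \sigma_\delta^2 \bm{I}_p$) so that the bias term cancels and the residual is expressible through $h = n^{-1/2}\bm{X}'(\bm{E}-\bm{\Delta}\bm{B}) + n^{1/2}\sigma_\delta^2 \bm{B}$ and $H = n^{-1/2}\bm{X}'\bm{X} - n^{1/2}\bm{\Sigma}_X$.

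Next I would perform the linearization. Writing $n^{-1}\bm{X}'\bm{X} = \bm{\Sigma}_X + n^{-1/2}H$ and expanding $(\bm{X}'\bm{X})^{-1}$ to first order, I expect to obtain, after a matrix-algebra manipulation, an expression of the form
\begin{equation}
n^{1/2}(\hat{\bm{B}}_1 - \bm{B}) = (\bm{\Sigma}\bm{K})^{-1}\bigl[h + \bm{B}'\bar{K}_X H\bigr] + o_p(1),\nonumber
\end{equation}
where the term $\bm{B}'\bar{K}_X H$ captures the contribution of the fluctuation of $\bm{X}'\bm{X}$ around its probability limit, using $\bar{K}_X = (\bm{\Sigma}_X - \bm{\Sigma}_D)\bm{\Sigma}_X^{-1}$ and the consistency statements $\bm{\Sigma}_X \xrightarrow{p} \bm{\Sigma}$, $\bm{\Sigma}_D \xrightarrow{p} \bm{\Sigma} - \sigma_\delta^2\bm{I}_p$ from the excerpt. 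Applying the $\text{vec}(\cdot)$ operator and the identity $\text{vec}(\bm{A}\bm{X}\bm{B}) = (\bm{B}'\otimes \bm{A})\text{vec}(\bm{X})$, the leading factor $(\bm{\Sigma}\bm{K})^{-1}$ acting on the left becomes the Kronecker factor $\bm{A}_1 = (\bm{\Sigma}\bm{K})^{-1}\otimes \bm{I}_q$, so that $\text{vec}\bigl(n^{1/2}(\hat{\bm{B}}_1-\bm{B})'\bigr) = \bm{A}_1\bigl[\text{vec}(h') + \text{vec}(\bm{B}'\bar{K}_X H)\bigr] + o_p(1)$.

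Having identified the linearized form, I would then apply a multivariate central limit theorem to the bracketed vector $\text{vec}(h') + \text{vec}(\bm{B}'\bar{K}_X H)$. Under Assumptions $(\mathcal{A}_1)$--$(\mathcal{A}_4)$ the entries of $\bm{E}$, $\bm{\Delta}$, and $\bm{\Psi}$ are independent with the prescribed moments, so $h$ is (after centering by the $\sigma_\delta^2$ term) a normalized sum of independent contributions, as is $H$; the finite fourth-moment conditions guarantee a Lindeberg-type condition holds. The limiting covariance of this vector is exactly $\bm{\Lambda}$ by its definition as the limit of $\textrm{E}\{[\text{vec}(h')+\text{vec}(\bm{B}'\bar{K}_X H)][\cdots]'\}$, whose existence is granted by the cited result in Jain et al.~(2011). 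Hence the bracketed vector converges in distribution to $\mathcal{N}(\bm{0},\bm{\Lambda})$, and by Slutsky's theorem together with the continuous-mapping argument, the full expression converges to $\mathcal{N}(\bm{0}, \bm{A}_1\bm{\Lambda}\bm{A}_1')$, which is the vectorized form of the matrix-normal law $\mathcal{N}_{p\times q}(\bm{0}, \bm{A}_1\bm{\Lambda}\bm{A}_1')$ claimed.

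The step I expect to be the main obstacle is the linearization that produces the clean first-order expansion and shows the bias cancels exactly. The $\sigma_\delta^2$ correction built into $\bm{K}_X^{-1}$ must precisely offset the $-\bm{X}'\bm{\Delta}\bm{B}$ inconsistency term to leading order, and verifying that the remainder is genuinely $o_p(1)$ requires careful bookkeeping of the products of the $n^{-1/2}$-order fluctuations $H$ and $h$ against the deterministic limits. Controlling these cross terms, and confirming that the fluctuation of $\bm{K}_X$ itself contributes only through the $\bm{B}'\bar{K}_X H$ term and not through higher-order remainders, is where the delicate work lies; the subsequent CLT application is then comparatively routine given the moment assumptions and the existence of $\bm{\Lambda}$.
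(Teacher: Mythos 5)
Your proposal is correct and follows essentially the same route as the paper, which for this theorem simply defers to the proof of Theorem~4.1 in Jain \emph{et al.}~(2011): the quantities $h$, $H$, $\bar{K}_X$, $\bm{\Lambda}$ and $\bm{A}_1$ are defined precisely so that the linearization-plus-CLT argument you outline goes through, with $\bm{\Sigma}_D^{-1}\to(\bm{\Sigma}\bm{K})^{-1}$ and $\bar{K}_X=\sigma_\delta^2\bm{\Sigma}_X^{-1}$ making the bias cancellation exact. The only blemish is notational: in your displayed expansion $\bm{B}'\bar{K}_X H$ is $q\times p$ while $h$ is $p\times q$, so the dimensionally consistent statement is the vectorized one you give immediately afterwards, $\text{vec}\bigl(n^{1/2}(\hat{\bm{B}}_1-\bm{B})'\bigr)=\bm{A}_1\bigl[\text{vec}(h')+\text{vec}(\bm{B}'\bar{K}_X H)\bigr]+o_p(1)$.
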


The proof is similar to that given in Jain {et al.}~(2011, see the
proof of Theorem~4.1). %By using Theorem~\ref{theorhH}, we establish
%the following proposition.
%\begin{prop}
%If Assumption ($\mathcal{A}_1$)-($\mathcal{A}_6$) hold, we have\\
%$n^{\frac{1}{2}}(\hat{\bm{B}}_1-\bm{B})\xrightarrow[n\rightarrow\infty]{d}X_1\sim
%\mathcal{N}_{p\times q}(O_{p\times q}, \bm{A}_1\bm{\Lambda}  \bm{A}'_{1})$, where
%$A_1=(\Sigma K)^{-1}\otimes I_q$.
%\end{prop}
%The proof follows from the similar asymptotic techniques as used in
%deriving Theorem~4.1 of Jain~(2011).
% In Section~\ref{sec:largesample}, we give some generalizes Theorem~4.1 given in Jain {\em et
%al.}~(2011).
\subsubsection{A class of restricted estimators}
In this subsection, we present a class of  estimators of $\bm{B}$
which are consistent and satisfy the restriction
in~\eqref{restriction}. As commonly the case in constrained
estimation, this is obtained by minimizing a certain objective
function subject to the constraint. In particular, since the
objective function $\hat{\mathcal{G}}_{2}$ given in~\eqref{G2hat}
leads to a consistent estimator, the RE can be obtained by
minimizing $\hat{\mathcal{G}}_{2}$ subject to the constraint
$\bm{R}_{1}\bm{B}\bm{R}_2=\bm{\theta}$. The following proposition
shows that the above objective function can be seen as a member of a
certain class of objective functions. For more details, we refer to
Jain~{\em et al.}~(2011).
\begin{prop} \label{propG2hat} We have
$\hat{\mathcal{G}}_2=\text{\em tr}(\bm{Z}^{'}\bm{Z})+\text{\em
tr}[(\hat{\bm{B}}_1-\bm{B})^{'}(\bm{X}^{'}\bm{X})\bm{K}_X(\hat{\bm{B}}_1-\bm{B})]$.
\end{prop}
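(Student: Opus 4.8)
The plan is to treat $\hat{\mathcal{G}}_2$ as a quadratic functional of the matrix argument $\bm{B}$ and to complete the square, with $\hat{\bm{B}}_1$ playing the role of the unconstrained minimizer. First I would expand $\mathcal{G}_1=\text{tr}(\bm{Z}'\bm{Z})-2\,\text{tr}(\bm{B}'\bm{X}'\bm{Z})+\text{tr}(\bm{B}'\bm{X}'\bm{X}\bm{B})$ and insert it into $\hat{\mathcal{G}}_2=\mathcal{G}_1-\text{tr}[\bm{B}'(n\bm{\Sigma}_X)(\bm{I}_p-\bm{K}_X)\bm{B}]$. The penalty matrix simplifies via the two structural facts $\bm{\Sigma}_X-\bm{\Sigma}_D=\sigma_\delta^2\bm{I}_p$ and $n\bm{\Sigma}_X\bm{K}_X=n\bm{\Sigma}_X\bm{\Sigma}_X^{-1}\bm{\Sigma}_D=n\bm{\Sigma}_D$, so that $n\bm{\Sigma}_X(\bm{I}_p-\bm{K}_X)=n\sigma_\delta^2\bm{I}_p$. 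Identifying the sample second-moment matrix $\bm{X}'\bm{X}$ with $n\bm{\Sigma}_X$ (as is implicit in the statement, where $\bm{X}'\bm{X}$ replaces $n\bm{\Sigma}_X$ in the final quadratic form) then collapses the two quadratic contributions $\text{tr}(\bm{B}'\bm{X}'\bm{X}\bm{B})$ and $-\text{tr}[\bm{B}'(n\bm{\Sigma}_X)(\bm{I}_p-\bm{K}_X)\bm{B}]$ into the single term $\text{tr}[\bm{B}'(\bm{X}'\bm{X})\bm{K}_X\bm{B}]$, which is precisely the leading matrix appearing on the right-hand side.

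Next I would record the two consequences of the definitions that drive the cancellation of the linear terms: the normal equations $\bm{X}'\bm{Z}=(\bm{X}'\bm{X})\hat{\bm{B}}$ coming from $\hat{\bm{B}}=(\bm{X}'\bm{X})^{-1}\bm{X}'\bm{Z}$, and the relation $\hat{\bm{B}}=\bm{K}_X\hat{\bm{B}}_1$ coming from $\hat{\bm{B}}_1=\bm{K}_X^{-1}\hat{\bm{B}}$; together these give the key identity $(\bm{X}'\bm{X})\bm{K}_X\hat{\bm{B}}_1=\bm{X}'\bm{Z}$. I would also use that $\bm{K}_X=\bm{\Sigma}_X^{-1}\bm{\Sigma}_D$ is symmetric, which follows because $\bm{\Sigma}_X$ and $\bm{\Sigma}_D$ differ only by the scalar matrix $\sigma_\delta^2\bm{I}_p$ and hence commute. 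With these facts I would expand the right-hand side quadratic form $\text{tr}[(\hat{\bm{B}}_1-\bm{B})'(\bm{X}'\bm{X})\bm{K}_X(\hat{\bm{B}}_1-\bm{B})]$ into its four pieces and verify, using $(\bm{X}'\bm{X})\bm{K}_X\hat{\bm{B}}_1=\bm{X}'\bm{Z}$ together with the symmetry of $\bm{K}_X$ and its commutativity with $\bm{X}'\bm{X}$ under this identification, that the two mixed pieces reproduce exactly $-2\,\text{tr}(\bm{B}'\bm{X}'\bm{Z})$ and that the pure-$\bm{B}$ piece reproduces $\text{tr}[\bm{B}'(\bm{X}'\bm{X})\bm{K}_X\bm{B}]$.

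The main obstacle is the bookkeeping of the terms that are free of the argument $\bm{B}$. After the mixed and quadratic pieces have been matched, one is left on the right-hand side with the purely $\hat{\bm{B}}_1$-dependent piece $\text{tr}[\hat{\bm{B}}_1'(\bm{X}'\bm{X})\bm{K}_X\hat{\bm{B}}_1]$, which must be reconciled with the constant $\text{tr}(\bm{Z}'\bm{Z})$; this is exactly the point at which one invokes that $\hat{\bm{B}}_1$ is the stationary point of $\hat{\mathcal{G}}_2$, via the identity $(\bm{X}'\bm{X})\bm{K}_X\hat{\bm{B}}_1=\bm{X}'\bm{Z}$, so that this remainder is absorbed into the $\bm{B}$-independent constant. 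Since only the quadratic-in-$(\hat{\bm{B}}_1-\bm{B})$ term is needed for the subsequent constrained minimization under $\bm{R}_1\bm{B}\bm{R}_2=\bm{\theta}$, I would arrange the write-up so that this constant is collected at the very end and isolated as the term that plays no role in determining the restricted estimator.
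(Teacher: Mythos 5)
Your overall strategy --- completing the square in $\bm{B}$, using $n\bm{\Sigma}_X(\bm{I}_p-\bm{K}_X)=n\sigma_\delta^2\bm{I}_p$ to merge the penalty term with $\text{tr}(\bm{B}'\bm{X}'\bm{X}\bm{B})$, and using the normal equations together with $\hat{\bm{B}}=\bm{K}_X\hat{\bm{B}}_1$ to match the cross terms --- is exactly the ``algebraic computation'' the paper leaves to the reader, and your explicit flagging of the identification $\bm{X}'\bm{X}=n\bm{\Sigma}_X$ (without which $(\bm{X}'\bm{X})\bm{K}_X$ is neither symmetric nor equal to $\bm{X}'\bm{X}-n\sigma_\delta^2\bm{I}_p$) is a point in your favour. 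The gap is in your last paragraph. The leftover $\bm{B}$-free piece on the right-hand side is
\[
\text{tr}\bigl[\hat{\bm{B}}_1'(\bm{X}'\bm{X})\bm{K}_X\hat{\bm{B}}_1\bigr]=\text{tr}\bigl(\hat{\bm{B}}_1'\bm{X}'\bm{Z}\bigr),
\]
by the very identity $(\bm{X}'\bm{X})\bm{K}_X\hat{\bm{B}}_1=\bm{X}'\bm{Z}$ that you invoke. That identity \emph{evaluates} the remainder; it does not annihilate it, and the term is generically strictly positive. Since the statement pins the $\bm{B}$-independent part down to exactly $\text{tr}(\bm{Z}'\bm{Z})$, there is nothing left into which this remainder can be ``absorbed.'' Appealing to the fact that $\hat{\bm{B}}_1$ is the stationary point of $\hat{\mathcal{G}}_2$ does not help either: stationarity fixes the location of the minimizer, not the value of the constant.

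What your computation actually establishes (under the identification $\bm{X}'\bm{X}=n\bm{\Sigma}_X$) is
\[
\hat{\mathcal{G}}_2=\text{tr}(\bm{Z}'\bm{Z})-\text{tr}\bigl[\hat{\bm{B}}_1'(\bm{X}'\bm{X})\bm{K}_X\hat{\bm{B}}_1\bigr]+\text{tr}\bigl[(\hat{\bm{B}}_1-\bm{B})'(\bm{X}'\bm{X})\bm{K}_X(\hat{\bm{B}}_1-\bm{B})\bigr],
\]
which differs from the displayed claim by the $\bm{B}$-free term $-\text{tr}(\hat{\bm{B}}_1'\bm{X}'\bm{Z})$. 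The discrepancy is immaterial for everything the proposition is used for, since the restricted estimators in \eqref{BtildeofW}--\eqref{B4} depend only on the quadratic form in $\hat{\bm{B}}_1-\bm{B}$; but a complete write-up must either display this extra constant explicitly or restate the identity as holding up to additive terms not involving $\bm{B}$. As written, the step ``the remainder is absorbed into the constant'' would fail if carried out.
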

The proof follows directly from algebraic computations. % For this
%paper to be self-contained, the proof of this proposition is
%outlined in the Appendix.
From Proposition~\ref{propG2hat}, as in
Jain~{\em et al.}~(2011) one considers below a more general class of
objective functions. To this end, let $\mathcal{P}_{p\times p}$
denote the set of all observable $p\times p$-symmetric and positive
definite matrices and let
\begin{eqnarray}\label{classofobject}
\left\{\hat{\mathcal{G}}_{3}\left(\hat{\bm{\Sigma}}\right)
=\text{tr}((\hat{\bm{B}}_1-\bm{B})^{'}\hat{\bm{\Sigma}}(\hat{\bm{B}}_1-\bm{B})):
\hat{\bm{\Sigma}} \in \mathcal{P}_{p\times p}\right\}.
\end{eqnarray}
Thus, $\hat{\mathcal{G}}_2$ is a member of this class with
$\hat{\bm{\Sigma}}=(\bm{X}^{'}\bm{X})\bm{K}_X$. Other members of
objective functions correspond to the cases where
$\hat{\bm{\Sigma}}=\bm{S}=\bm{X}^{'}\bm{X}$ and
$\hat{\bm{\Sigma}}=n\,\bm{I}_{p}$. For further details about the
objective function in~\eqref{classofobject}, we refer to Jain~{\em
et al.}~(2011). From the above class of objective function, one
obtains a class of restricted estimators
$\{\tilde{B}(\hat{\bm{\Sigma}}): \hat{\bm{\Sigma}} \in
\mathcal{P}_{p\times p}\}$ which satisfies the constraint
$\bm{R}_{1}\bm{B}\bm{R}_2=\theta$. Namely, by using the Lagrangian
method, we get
%To this end, let $\bm{\lambda}$ be
%$r_{2}\times r_{1}$-matrix of Lagrangian multipliers and let
%$\mathcal{L}_{\bm{\lambda}}=\hat{\mathcal{G}}_{3}\left(\hat{\bm{\Sigma}}\right)
%+\textrm{tr}\left(\bm{\lambda}'\left(\bm{R}_{1}\bm{B}\bm{R}_2-\theta\right)\right)$.
%Then, for a known symmetric and positive definite matrix
%$\hat{\bm{\Sigma}}$, the restricted estimators
%$\tilde{B}(\hat{\bm{\Sigma}})$ is obtained by solving the system of
%equations
%\begin{eqnarray}
%\frac{\partial }{\partial B}\mathcal{L}_{\bm{\lambda}}=0, \quad{ }  \frac{\partial}{\partial \bm{\lambda}}\mathcal{L}_{\bm{\lambda}}=0.
%\end{eqnarray}
%By some algebraic computations, we get
\begin{eqnarray}\label{BtildeofW}
\tilde{B}(\hat{\bm{\Sigma}})=\hat{\bm{B}}_1-(\hat{\bm{\Sigma}})^{-1}\bm{R}^{'}_1\left[\bm{R}_{1}(\hat{\bm{\Sigma}})^{-1}\bm{R}_{1}\right]^{-1}\left(\bm{R}_{1}\hat{\bm{B}}_1\bm{R}_{2}-\theta\right)(\bm{R}^{'}_{2}\bm{R}_{2})^{-1}\bm{R}^{'}_{2},
\end{eqnarray}
where $\hat{\bm{\Sigma}}$ is a known symmetric and positive definite
matrix. In particular, from \eqref{BtildeofW}, by replacing
$\hat{\bm{\Sigma}}$ by $(\bm{X}^{'}\bm{X})\bm{K}_X$,
$\bm{X}^{'}\bm{X}$ and $n\bm{I}_{p}$, respectively, one gets
\begin{eqnarray}\label{B2hat}
\hat{\bm{B}}_2=\hat{\bm{B}}_1-(\bm{X}^{'}\bm{X} \bm{K}_X
)^{-1}\bm{R}^{'}_1[\bm{R}_{1}(\bm{X}^{'}\bm{X}
\bm{K}_X)^{-1}\bm{R}^{'}_1]^{-1}(\bm{R}_{1}\hat{\bm{B}}_1\bm{R}_{2}-\theta)({\bm{R}^{'}_{2}}\bm{R}_{2})^{-1}{\bm{R}^{'}_{2}},\,
\, \, \,
\end{eqnarray}
% Further, by replacing $\hat{\bm{\Sigma}}$ by
%$\bm{X}^{'}\bm{X}$ in (\ref{BtildeofW}), we get
\begin{equation}\label{B3}
\hat{\bm{B}}_3=\hat{\bm{B}}_1-(\bm{X}^{'}\bm{X})^{-1}\bm{R}^{'}_1\left[\bm{R}_{1}(\bm{X}^{'}\bm{X})^{-1}\bm{R}^{'}_1\right]^{-1}\left(\bm{R}_{1}\hat{\bm{B}}_1\bm{R}_{2}-\theta\right)({\bm{R}^{'}_{2}}\bm{R}_{2})^{-1}{\bm{R}^{'}_{2}},
\end{equation}
% Finally, by replacing $\hat{\bm{\Sigma}}$
%by $n\bm{I}_{p}$ in (\ref{BtildeofW}), we get
\begin{equation}\label{B4}
\hat{\bm{B}}_4=\hat{\bm{B}}_1-\bm{R}^{'}_1\left(\bm{R}_{1}
\bm{R}^{'}_1\right)^{-1}\left(\bm{R}_{1}\hat{\bm{B}}_1\bm{R}_{2}-\theta\right)({\bm{R}^{'}_{2}}\bm{R}_{2})^{-1}{\bm{R}^{'}_{2}}.
\end{equation}
Note that the estimators $\hat{\bm{B}}_2$, $\hat{\bm{B}}_3$ and
$\hat{\bm{B}}_4$ are derived in Jain {\em et al.}~(2011). Here,
their derivation is given for the paper to be self-contained.

 %In this
%section, we recalled the UE and three restricted estimators which
%are given in Jain~{\em et al.}~(2011). In the rest of this paper, we
%give the main contributions of this paper. In particular, we derive
%the joint asymptotic normality between the UE and the REs, and we
%compare their relative performance in mean square error sense.
\section{Main results }\label{sec:largesample}
In this section, we derive the joint asymptotic distribution of all
estimators, under the restriction as well as under the sequence of
local alternative restrictions. In particular, we generalize
Theorem~4.1 in Jain~{\em et al.}~(2011) which gives the marginal
asymptotic distributions under the restriction.
\subsection{Asymptotic properties under the restriction}\label{sec:largesampleres}
In this subsection, we derive the joint asymptotic normality of the
UE and any member of the restricted estimators, under the
restriction. We suppose that the weighting matrix
$\hat{\bm{\Sigma}}$ satisfies the following assumption.
\begin{asu}\label{assumption:W*}
%($\mathcal{A}_{8}$) \quad{ }
 $\hat{\bm{\Sigma}}$ is such that $\ds{\frac{1}{n}}\hat{\bm{\Sigma}}\xrightarrow[n\rightarrow\infty]{P}Q_{0}$ where $Q_{0}$ is nonrandom and positive definite matrix.
\end{asu}
Note that the matrices $\bm{X}^{'}\bm{X} K_X$, $\bm{X}^{'}\bm{X}$
and $n \bm{I}_{p}$ satisfy Assumption~2 with the matrix $Q_{0}$
equals to $\bm{\Sigma}\bm{K}$, $\bm{\Sigma}$ and $\bm{I}_{p}$
respectively. To set up some notations, let
$A_{1}=(\bm{\Sigma}\bm{K})^{-1}\otimes \bm{I}_{q}$, let
%\begin{eqnarray}
%\bm{\Sigma}_{11}=A_1\bm{\Lambda}  \bm{A}'_{1},\label{Sigma11}
%\end{eqnarray}
\begin{eqnarray}
& &
A_2\left(\bm{Q}_{0}\right)=A_{1}-(\bm{Q}_{0})^{-1}\bm{R}^{'}_1(\bm{R}_{1}(\bm{Q}_{0})^{-1}\bm{R}^{'}_1)^{-1}\bm{R}_{1}
\left(\bm{\Sigma}\bm{K}\right)^{-1}\otimes
\bm{R}_{2}(R'_{2}\bm{R}_{2})^{-1}R'_{2},\nonumber\\
& & \bm{\Sigma}_{11}=\bm{A}_1\bm{\Lambda}  \bm{A}'_{1},\,
\,\bm{\Sigma}_{22}\left(\bm{Q}_{0}\right)=\bm{A}_2\left(\bm{Q}_{0}\right)\bm{\Lambda}
\bm{A}^{'}_2\left(\bm{Q}_{0}\right), \,\,
\bm{\Sigma}_{21}\left(\bm{Q}_{0}\right)=\bm{\Sigma}_{12}^{'}\left(\bm{Q}_{0}\right),\, \label{Sigmaij} \\
& & \bm{\Sigma}_{12}\left(\bm{Q}_{0}\right)=\bm{A}_1\bm{\Lambda}
\bm{A}^{'}_2\left(\bm{Q}_{0}\right).\nonumber
%\bm{\Sigma}_{21}\left(\bm{Q}_{0}\right)=\bm{\Sigma}_{12}^{'}\left(\bm{Q}_{0}\right),\nonumber\\
\end{eqnarray}
\begin{theo}\label{propjointclass}
 If Assumptions 1-2 hold and  $\bm{R}_{1}\bm{B}\bm{R}_2=\theta$, we have\\
 $\left(n^\frac{1}{2}\left(\hat{\bm{B}}_{1}-\bm{B}\right)',\,n^\frac{1}{2}\left(\tilde{B}(\hat{\bm{\Sigma}})-\bm{B}\right)'\right)'
 \xrightarrow[n\rightarrow\infty]{d}\left(\eta'_{1},\,\zeta^{*'}\right)'$
 where
 \begin{eqnarray}
\left(
  \begin{array}{c}
    \eta_{1} \\
    \zeta^{*} \\
  \end{array}
\right)\sim \mathcal{N}_{2p\times q}\left(\left(
                                        \begin{array}{c}
                                          \bm{0} \\
                                          \bm{0} \\
                                        \end{array}
                                      \right), \left(
                                                 \begin{array}{cc}
                                                   \bm{\Sigma}_{11} & \bm{\Sigma}_{12}\left(\bm{Q}_{0}\right) \\
                                                   \bm{\Sigma}_{21}\left(\bm{Q}_{0}\right) & \bm{\Sigma}_{22}\left(\bm{Q}_{0}\right) \\
                                                 \end{array}
                                               \right)
\right),
\end{eqnarray}
where $\bm{\Sigma}_{11}$, $\bm{\Sigma}_{12}\left(\bm{Q}_{0}\right)$,
$\bm{\Sigma}_{21}\left(\bm{Q}_{0}\right)$ and
$\bm{\Sigma}_{22}\left(\bm{Q}_{0}\right)$ are defined
in~\eqref{Sigmaij}.
\end{theo}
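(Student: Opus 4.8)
The plan is to express the restricted estimator as an asymptotically linear (random) transformation of the unrestricted estimator, so that both components of the joint vector are driven by one and the same limiting Gaussian object. First I would use the restriction $\bm{R}_1\bm{B}\bm{R}_2=\bm{\theta}$ to write $\bm{R}_1\hat{\bm{B}}_1\bm{R}_2-\bm{\theta}=\bm{R}_1(\hat{\bm{B}}_1-\bm{B})\bm{R}_2$. Substituting this into~\eqref{BtildeofW} and scaling by $n^{1/2}$ gives
\begin{equation*}
n^{1/2}\bigl(\tilde{B}(\hat{\bm{\Sigma}})-\bm{B}\bigr)=n^{1/2}(\hat{\bm{B}}_1-\bm{B})-\bm{P}_n\,n^{1/2}(\hat{\bm{B}}_1-\bm{B})\,\bm{G},
\end{equation*}
where $\bm{G}=\bm{R}_2(\bm{R}_2'\bm{R}_2)^{-1}\bm{R}_2'$ and, writing $\bm{W}_n=n^{-1}\hat{\bm{\Sigma}}$, $\bm{P}_n=\bm{W}_n^{-1}\bm{R}_1'(\bm{R}_1\bm{W}_n^{-1}\bm{R}_1')^{-1}\bm{R}_1$.

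The next step handles the scaling. By Assumption~\ref{assumption:W*}, $\bm{W}_n\xrightarrow{P}\bm{Q}_0$; the crucial point is that the two factors of $n$ coming from $(\hat{\bm{\Sigma}})^{-1}$ and from the inverse of $\bm{R}_1(\hat{\bm{\Sigma}})^{-1}\bm{R}_1'$ cancel, so that $\bm{P}_n$ has the nondegenerate limit $\bm{P}_0=\bm{Q}_0^{-1}\bm{R}_1'(\bm{R}_1\bm{Q}_0^{-1}\bm{R}_1')^{-1}\bm{R}_1$ by the continuous mapping theorem. Then I would vectorize through the transpose, consistently with the convention used to define $\bm{\Lambda}$, and use the identity $\mathrm{vec}(\bm{G}\bm{U}\bm{P}_n')=(\bm{P}_n\otimes\bm{G})\mathrm{vec}(\bm{U})$ together with the symmetry $\bm{G}'=\bm{G}$ to obtain
\begin{equation*}
\mathrm{vec}\bigl((n^{1/2}(\tilde{B}(\hat{\bm{\Sigma}})-\bm{B}))'\bigr)=(\bm{I}_{pq}-\bm{P}_n\otimes\bm{G})\,\mathrm{vec}\bigl((n^{1/2}(\hat{\bm{B}}_1-\bm{B}))'\bigr).
\end{equation*}

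Now I would invoke the asymptotic representation underlying Theorem~\ref{theorhH}, namely $\mathrm{vec}((n^{1/2}(\hat{\bm{B}}_1-\bm{B}))')=\bm{A}_1\bm{\xi}_n+o_P(1)$ with $\bm{\xi}_n=\mathrm{vec}(h')+\mathrm{vec}(\bm{B}'\bar{K}_X H)\xrightarrow{d}\bm{\xi}\sim\mathcal{N}(\bm{0},\bm{\Lambda})$, the very object whose limiting covariance defines $\bm{\Lambda}$. Stacking the two vectorized quantities exhibits them as $\bigl(\bm{A}_1',\,((\bm{I}_{pq}-\bm{P}_n\otimes\bm{G})\bm{A}_1)'\bigr)'\bm{\xi}_n+o_P(1)$, a single random affine image of $\bm{\xi}_n$. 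Since $(\bm{I}_{pq}-\bm{P}_n\otimes\bm{G})\bm{A}_1\xrightarrow{P}(\bm{I}_{pq}-\bm{P}_0\otimes\bm{G})\bm{A}_1$, Slutsky's theorem yields joint convergence to a centered Gaussian whose covariance is the corresponding affine image of $\bm{\Lambda}$. It remains to verify the algebraic identity $(\bm{I}_{pq}-\bm{P}_0\otimes\bm{G})\bm{A}_1=\bm{A}_2(\bm{Q}_0)$: by the mixed-product rule, $(\bm{P}_0\otimes\bm{G})\bm{A}_1=(\bm{P}_0\otimes\bm{G})((\bm{\Sigma}\bm{K})^{-1}\otimes\bm{I}_q)=(\bm{P}_0(\bm{\Sigma}\bm{K})^{-1})\otimes\bm{G}$, and substituting the definitions of $\bm{P}_0$ and $\bm{G}$ reproduces exactly the expression for $\bm{A}_2(\bm{Q}_0)$ in~\eqref{Sigmaij}. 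The four resulting covariance blocks $\bm{A}_1\bm{\Lambda}\bm{A}_1'$, $\bm{A}_1\bm{\Lambda}\bm{A}_2(\bm{Q}_0)'$ and $\bm{A}_2(\bm{Q}_0)\bm{\Lambda}\bm{A}_2(\bm{Q}_0)'$ then match $\bm{\Sigma}_{11}$, $\bm{\Sigma}_{12}(\bm{Q}_0)$, $\bm{\Sigma}_{21}(\bm{Q}_0)$ and $\bm{\Sigma}_{22}(\bm{Q}_0)$, completing the argument.

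The main obstacle I anticipate is twofold. First, one must carefully track the powers of $n$ in $\bm{P}_n$ and confirm the cancellation that produces a finite, invertible limit; this is precisely where Assumption~\ref{assumption:W*} enters in an essential way, and where a naive bound would fail. Second, the real content beyond Theorem~\ref{theorhH} is \emph{jointness}: the marginal limits alone are not enough, and the key device is recognizing that after the reduction both scaled estimators are continuous images of the single sequence $\bm{\xi}_n$, so a single weak-convergence input together with Slutsky delivers the joint law. The remaining Kronecker-product bookkeeping to match $\bm{A}_2(\bm{Q}_0)$ is routine but must be carried out with the correct (transpose) vectorization convention in order to align with the definition of $\bm{\Lambda}$.
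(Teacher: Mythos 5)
Your proposal is correct and follows essentially the same route as the paper: the same decomposition of $\tilde{B}(\hat{\bm{\Sigma}})-\bm{B}$ as an affine (random-coefficient) transformation of $\hat{\bm{B}}_1-\bm{B}$ whose coefficients converge in probability to $\bm{Q}_0^{-1}\bm{R}_1'(\bm{R}_1\bm{Q}_0^{-1}\bm{R}_1')^{-1}\bm{R}_1$ and $\bm{R}_2(\bm{R}_2'\bm{R}_2)^{-1}\bm{R}_2'$, followed by a Slutsky argument and the Kronecker-product identification of $\bm{A}_2(\bm{Q}_0)$. The only organizational difference is that the paper first proves the local-alternative version (Theorem~\ref{propjointclassloc}) using its general stacking result (Corollary~\ref{corollnorm}, built on Lemmas~\ref{lemmanorm} and~\ref{lemmajointnorm}) and then obtains this theorem by setting $\bm{\theta}_0=\bm{0}$, whereas you argue directly from the representation $\bm{A}_1\bm{\xi}_n+o_P(1)$ underlying Theorem~\ref{theorhH}; the substance is the same.
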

The proof of this theorem is given in the Appendix. The above
theorem generalizes Theorem~4.1 in Jain {\em et al.}~(2011) in two
ways. First, the estimator $\hat{\bm{B}}(\hat{\bm{\Sigma}})$
encloses as special cases the restricted estimators
$\hat{\bm{B}}_{2}$, $\hat{\bm{B}}_{3}$ and $\hat{\bm{B}}_{4}$.
Second, the above result gives the joint asymptotic distribution
between the UE and any member of the class of restricted estimators;
from which the marginal asymptotic distribution follows directly.
Indeed, if $\bm{Q}_{0}$ is taken as $K\bm{\Sigma}$, $\bm{\Sigma}$
and $\bm{I}_{p}$, respectively, the above result gives the
asymptotic distribution of
$n^{1/2}\left(\hat{\bm{B}}_{2}-\bm{B}\right)$,
$n^{1/2}\left(\hat{\bm{B}}_{3}-\bm{B}\right)$, and
$n^{1/2}\left(\hat{\bm{B}}_{4}-\bm{B}\right)$  given in Jain {\em et
al.}~(2011). Below, we give another generalization of the limiting
distributions given in Jain {\em et al.}~(2011). In particular, we
establish the joint asymptotic normality between the estimators
$\hat{\bm{B}}_{1}$, $\hat{\bm{B}}_{2}$, $\hat{\bm{B}}_{3}$ and
$\hat{\bm{B}}_{4}$, under the sequence of local alternative
restrictions. On the top of this result, as intermediate step,  we
also generalize Proposition~A.10 and Corollary~A.2 in Chen and
Nkurunziza~(2016).

\subsection{Asymptotic results under local
alternative}\label{sec:largesampleloc}
In this subsection, we present the asymptotic properties of the UE
and the restricted estimators under the following sequence of local
alternative restrictions
\begin{equation}
\bm{R}_{1}\bm{B}\bm{R}_2=\bm{\theta}+\frac{\bm{\theta}_0}{\sqrt{n}},\ \ \ \
n=1,2,.... \label{localaltern}
\end{equation} where $\bm{\theta}_0$ is fixed with $||\bm{\theta}_0||<\infty$.
 Note that if $\bm{\theta}_0=0$ in
(\ref{localaltern}), then (\ref{localaltern}) becomes
(\ref{restriction}). Thus, the results established under
(\ref{localaltern}) generalize the results given in Jain~{\em et
al.}~(2011), which are established under (\ref{restriction}).
\begin{theo}\label{propjointclassloc}
Suppose that Assumptions 1 and 2 hold along with the sequence of
local alternative in~\eqref{localaltern}, then
 $n^\frac{1}{2}\left(\left(\hat{\bm{B}}_{1}-\bm{B}\right)',\,\left(\tilde{B}(\hat{\bm{\Sigma}})-\bm{B}\right)'\right)'
 \xrightarrow[n\rightarrow\infty]{d}\left(\eta'_{1},\,\eta^{*'}\right)'$
 where
 \begin{eqnarray}
\left(
  \begin{array}{c}
    \eta_{1} \\
    \eta^{*} \\
  \end{array}
\right)\sim \mathcal{N}_{2p\times q}\left(\left(
                                        \begin{array}{c}
                                          \bm{0} \\
                                          \bm{\mu}\left(\bm{Q}_{0}\right) \\
                                        \end{array}
                                      \right), \left(
                                                 \begin{array}{cc}
                                                   \bm{\Sigma}_{11} & \bm{\Sigma}_{12}\left(\bm{Q}_{0}\right) \\
                                                   \bm{\Sigma}_{21}\left(\bm{Q}_{0}\right) & \bm{\Sigma}_{22}\left(\bm{Q}_{0}\right) \\
                                                 \end{array}
                                               \right)
\right),
\end{eqnarray}
where
$\bm{\mu}\left(\bm{Q}_{0}\right)=-\bm{Q}_{0}^{-1}\bm{R}^{'}_1(\bm{R}_{1}\bm{Q}_{0}^{-1}\bm{R}^{'}_1)^{-1}
\theta_0(\bm{R}^{'}_{2}\bm{R}_{2})^{-1}\bm{R}^{'}_{2},$,
$\bm{\Sigma}_{11}$, $\bm{\Sigma}_{12}\left(\bm{Q}_{0}\right)$,
$\bm{\Sigma}_{21}\left(\bm{Q}_{0}\right)$ and
$\bm{\Sigma}_{12}\left(\bm{Q}_{0}\right)$  are defined as in
Theorem~\ref{propjointclass}.
\end{theo}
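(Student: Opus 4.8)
The plan is to follow the proof of Theorem~\ref{propjointclass} almost verbatim and to isolate the single place where the local alternative in~\eqref{localaltern} enters, namely through the residual $\bm{R}_{1}\hat{\bm{B}}_{1}\bm{R}_{2}-\bm{\theta}$. Starting from the closed form~\eqref{BtildeofW}, I would write
$$n^{\frac{1}{2}}\left(\tilde{B}(\hat{\bm{\Sigma}})-\bm{B}\right)=n^{\frac{1}{2}}\left(\hat{\bm{B}}_{1}-\bm{B}\right)-(\hat{\bm{\Sigma}})^{-1}\bm{R}^{'}_1\left[\bm{R}_{1}(\hat{\bm{\Sigma}})^{-1}\bm{R}^{'}_1\right]^{-1}n^{\frac{1}{2}}\left(\bm{R}_{1}\hat{\bm{B}}_{1}\bm{R}_{2}-\bm{\theta}\right)(\bm{R}^{'}_{2}\bm{R}_{2})^{-1}\bm{R}^{'}_{2}.$$
Under~\eqref{localaltern} one has $\bm{R}_{1}\bm{B}\bm{R}_{2}-\bm{\theta}=\bm{\theta}_0/\sqrt{n}$, so that $n^{\frac{1}{2}}(\bm{R}_{1}\hat{\bm{B}}_{1}\bm{R}_{2}-\bm{\theta})=\bm{R}_{1}\,n^{\frac{1}{2}}(\hat{\bm{B}}_{1}-\bm{B})\,\bm{R}_{2}+\bm{\theta}_0$. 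This is the crucial decomposition: the first summand is the same stochastic term treated under the restriction in Theorem~\ref{propjointclass}, while the second is a deterministic drift of order $O(1)$ that survives the $\sqrt{n}$ scaling and will produce the nonzero mean $\bm{\mu}(\bm{Q}_0)$.

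Next I would dispose of the random weighting matrix. Writing $\hat{\bm{Q}}=n^{-1}\hat{\bm{\Sigma}}$, Assumption~\ref{assumption:W*} gives $\hat{\bm{Q}}\xrightarrow[n\to\infty]{P}\bm{Q}_0$, and the factors of $n$ cancel, so that $(\hat{\bm{\Sigma}})^{-1}\bm{R}^{'}_1[\bm{R}_{1}(\hat{\bm{\Sigma}})^{-1}\bm{R}^{'}_1]^{-1}=\hat{\bm{Q}}^{-1}\bm{R}^{'}_1(\bm{R}_{1}\hat{\bm{Q}}^{-1}\bm{R}^{'}_1)^{-1}$, which converges in probability to $\bm{Q}_0^{-1}\bm{R}^{'}_1(\bm{R}_{1}\bm{Q}_0^{-1}\bm{R}^{'}_1)^{-1}$ by the continuous mapping theorem. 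Vectorizing by means of the identity $\text{vec}((\bm{F}\bm{G}\bm{L})^{'})=(\bm{F}\otimes\bm{L}^{'})\text{vec}(\bm{G}^{'})$ and using the symmetry of $S=\bm{R}_{2}(\bm{R}^{'}_{2}\bm{R}_{2})^{-1}\bm{R}^{'}_{2}$, I would reduce the display above to $\text{vec}((n^{\frac{1}{2}}(\tilde{B}(\hat{\bm{\Sigma}})-\bm{B}))^{'})=[\bm{I}_{pq}-(\bm{P}_n\otimes S)]u_n-d_n$, where $u_n=\text{vec}((n^{\frac{1}{2}}(\hat{\bm{B}}_{1}-\bm{B}))^{'})$, $\bm{P}_n=\hat{\bm{Q}}^{-1}\bm{R}^{'}_1(\bm{R}_{1}\hat{\bm{Q}}^{-1}\bm{R}^{'}_1)^{-1}\bm{R}_{1}\xrightarrow[n\to\infty]{P}\bm{P}_0$, and the drift $d_n\xrightarrow[n\to\infty]{P}-\text{vec}(\bm{\mu}(\bm{Q}_0)^{'})$.

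The conclusion then follows by Slutsky's theorem together with the continuous mapping theorem, in exact parallel with the proof of Theorem~\ref{propjointclass}. The stochastic part $[\bm{I}_{pq}-(\bm{P}_n\otimes S)]u_n$ is precisely the quantity whose joint limit with $u_n$ is computed there; since $(\bm{P}_n,d_n)$ converges in probability to the constant $(\bm{P}_0,-\text{vec}(\bm{\mu}(\bm{Q}_0)^{'}))$ while $u_n\xrightarrow[n\to\infty]{d}\text{vec}(\eta_1^{'})=\bm{A}_1W$ with $W\sim\mathcal{N}(\bm{0},\bm{\Lambda})$, the triple $(u_n,\bm{P}_n,d_n)$ converges jointly in distribution and the affine map yields the joint limit of $(u_n,\text{vec}((n^{\frac{1}{2}}(\tilde{B}-\bm{B}))^{'}))$ as $(\bm{A}_1W,\,[\bm{I}_{pq}-(\bm{P}_0\otimes S)]\bm{A}_1W+\text{vec}(\bm{\mu}(\bm{Q}_0)^{'}))$. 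The one remaining algebraic check is the identity $[\bm{I}_{pq}-(\bm{P}_0\otimes S)]\bm{A}_1=\bm{A}_2(\bm{Q}_0)$, which is immediate from $\bm{A}_1=(\bm{\Sigma}\bm{K})^{-1}\otimes\bm{I}_q$ and the mixed-product rule for Kronecker products applied to the definition of $\bm{A}_2(\bm{Q}_0)$ in~\eqref{Sigmaij}. Hence the joint limit is the affine image of the single Gaussian vector $W$, so it is jointly normal; its UE block has mean $\bm{0}$ and its restricted block mean $\bm{\mu}(\bm{Q}_0)$, while its covariance blocks are $\bm{A}_1\bm{\Lambda}\bm{A}^{'}_1$, $\bm{A}_1\bm{\Lambda}\bm{A}^{'}_2(\bm{Q}_0)$ and $\bm{A}_2(\bm{Q}_0)\bm{\Lambda}\bm{A}^{'}_2(\bm{Q}_0)$, identical to those of Theorem~\ref{propjointclass}. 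De-vectorizing gives the stated matrix-normal law.

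The genuinely new content is slight, since the whole stochastic apparatus is inherited from Theorem~\ref{propjointclass}. The delicate points are (i) justifying that multiplying the only-weakly-convergent $u_n$ by the coefficient $\bm{P}_n$, which merely converges in probability, is legitimate, which is exactly where Slutsky/joint convergence must be invoked rather than naive substitution; and (ii) correctly tracking the sign and the limiting constant of the deterministic drift $d_n$ so that it lands on $\bm{\mu}(\bm{Q}_0)$ rather than its negative. Everything else is bookkeeping parallel to the restricted case.
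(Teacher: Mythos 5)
Your proposal is correct and follows essentially the same route as the paper: the same decomposition of $n^{1/2}(\tilde{B}(\hat{\bm{\Sigma}})-\bm{B})$ into the stochastic term $-\bm{G}_{2n}\bm{R}_1\,n^{1/2}(\hat{\bm{B}}_1-\bm{B})\,\bm{R}_2\bm{P}_n$ plus the drift $-\bm{G}_{2n}\bm{\theta}_0\bm{P}_n$ arising from $\bm{R}_1\bm{B}\bm{R}_2-\bm{\theta}=\bm{\theta}_0/\sqrt{n}$, followed by Slutsky and the affine-image-of-a-Gaussian argument. The only cosmetic difference is that you carry out the vectorization and Kronecker-product bookkeeping inline, whereas the paper packages exactly that step as Corollary~\ref{corollnorm} (a consequence of Lemma~\ref{lemmajointnorm}) and cites it.
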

The proof of this theorem is given in the Appendix. By using the
similar techniques, we establish the joint distribution of the UE
and the restricted estimators given in \eqref{B2hat}, \eqref{B3} and
\eqref{B4}. To introduce some notations, let
\begin{eqnarray}
\bm{A}_{2}=\bm{A}_{1}-(\bm{\Sigma}\bm{K})^{-1}\bm{R}^{'}_1(\bm{R}_{1}(\bm{\Sigma}\bm{K})^{-1}\bm{R}^{'}_1)^{-1}\bm{R}_{1}
\left(\bm{\Sigma}\bm{K}\right)^{-1}\otimes
\bm{R}_{2}(R'_{2}\bm{R}_{2})^{-1}R'_{2},\nonumber\\
 \bm{A}_{3}=\bm{A}_{1}-(\bm{\Sigma})^{-1}\bm{R}^{'}_1(\bm{R}_{1}(\bm{\Sigma})^{-1}\bm{R}^{'}_1)^{-1}\bm{R}_{1}
\left(\bm{\Sigma}\bm{K}\right)^{-1}\otimes
\bm{R}_{2}(R'_{2}\bm{R}_{2})^{-1}R'_{2},\nonumber\\
\bm{A}_{4}=\bm{A}_{1}-\bm{R}^{'}_1(\bm{R}_{1}
\bm{R}^{'}_1)^{-1}\bm{R}_{1}
\left(\bm{\Sigma}\bm{K}\right)^{-1}\otimes
\bm{R}_{2}(R'_{2}\bm{R}_{2})^{-1}R'_{2},\nonumber\\
 \bm{\Sigma}_{ij}=\bm{A}_{i}\bm{\Lambda} \bm{A}^{'}_{j},
i=1,2,3,4, j=1,2,3,4, \nonumber\\
\bm{\mu}_{2}=-(\Sigma
K)^{-1}\bm{R}^{'}_1(\bm{R}_{1}(\Sigma
K)^{-1}\bm{R}^{'}_1)^{-1}\theta_0(\bm{R}^{'}_{2}\bm{R}_{2})^{-1}\bm{R}^{'}_{2},\nonumber\\
\bm{\mu}_{3}=-\Sigma^{-1}\bm{R}^{'}_1(\bm{R}_{1}\Sigma^{-1}\bm{R}^{'}_1)^{-1}\theta_0(\bm{R}^{'}_{2}\bm{R}_{2})^{-1}\bm{R}^{'}_{2},
\bm{\mu}_{4}=-\bm{R}^{'}_1(\bm{R}_{1}\bm{R}^{'}_1)^{-1}\theta_0(\bm{R}^{'}_{2}\bm{R}_{2})^{-1}\bm{R}^{'}_{2}.\nonumber
\end{eqnarray}
\begin{theo}\label{propjoinnorm}
  If Assumption~1 holds along with~\eqref{localaltern}, we have\\
   $\left(n^\frac{1}{2}\left(\hat{\bm{B}}_{1}-\bm{B}\right)',\,n^\frac{1}{2}\left(\hat{\bm{B}}_{2}-\bm{B}\right)',
   \,n^\frac{1}{2}\left(\hat{\bm{B}}_{3}-\bm{B}\right)',\,n^\frac{1}{2}\left(\hat{\bm{B}}_{4}-\bm{B}\right)'\right)'
 \xrightarrow[n\rightarrow\infty]{d}\bm{\eta}$
 where
 \begin{eqnarray}
\bm{\eta}=\left(
  \begin{array}{c}
    \bm{\eta}_{1} \\
    \bm{\eta}_{2} \\
   \bm{\eta}_{3} \\
    \bm{\eta}_{4}
  \end{array}
\right)\sim \mathcal{N}_{4p\times q}\left(\left(
                                        \begin{array}{c}
                                          \bm{0} \\
                                          \bm{\mu}_{2} \\
                                          \bm{\mu}_{3} \\
                                          \bm{\mu}_{4} \\
                                        \end{array}
                                      \right),\,\left(
                                                  \begin{array}{cccc}
                                                    \bm{\Sigma}_{11} & \bm{\Sigma}_{12} & \bm{\Sigma}_{13} & \bm{\Sigma}_{14} \\
                                                    \bm{\Sigma}_{21} & \bm{\Sigma}_{22} & \bm{\Sigma}_{23} & \bm{\Sigma}_{24} \\
                                                    \bm{\Sigma}_{31} & \bm{\Sigma}_{32} & \bm{\Sigma}_{33} & \bm{\Sigma}_{34} \\
                                                    \bm{\Sigma}_{41} & \bm{\Sigma}_{32} & \bm{\Sigma}_{43} & \bm{\Sigma}_{44} \\
                                                  \end{array}
                                                \right)
\right).
\end{eqnarray}
\end{theo}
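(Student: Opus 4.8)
The plan is to show that all four scaled estimators are, asymptotically, affine images of the \emph{same} limiting Gaussian matrix, so that joint normality and the stated covariance structure follow from one application of Slutsky's theorem rather than from three separate calculations. First I would record that for $i=2,3,4$ each restricted estimator has the common form
$\hat{\bm{B}}_i=\hat{\bm{B}}_1-\hat{\bm{\Sigma}}_i^{-1}\bm{R}_1'[\bm{R}_1\hat{\bm{\Sigma}}_i^{-1}\bm{R}_1']^{-1}(\bm{R}_1\hat{\bm{B}}_1\bm{R}_2-\bm{\theta})(\bm{R}_2'\bm{R}_2)^{-1}\bm{R}_2'$,
with weighting matrices $\hat{\bm{\Sigma}}_2=\bm{X}'\bm{X}\bm{K}_X$, $\hat{\bm{\Sigma}}_3=\bm{X}'\bm{X}$ and $\hat{\bm{\Sigma}}_4=n\bm{I}_p$. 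Using the local alternative~\eqref{localaltern}, I would rewrite the shared central factor as $n^{1/2}(\bm{R}_1\hat{\bm{B}}_1\bm{R}_2-\bm{\theta})=\bm{R}_1\,n^{1/2}(\hat{\bm{B}}_1-\bm{B})\,\bm{R}_2+\bm{\theta}_0$, which isolates the single random driver $n^{1/2}(\hat{\bm{B}}_1-\bm{B})$ together with the deterministic drift $\bm{\theta}_0$. Note that the three specific weights already satisfy Assumption~\ref{assumption:W*} with limits $\bm{\Sigma}\bm{K}$, $\bm{\Sigma}$ and $\bm{I}_p$, as observed just after that assumption, which is why only Assumption~1 is hypothesized.

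Next I would exploit the convergence $n^{-1}\hat{\bm{\Sigma}}_2\xrightarrow{P}\bm{\Sigma}\bm{K}$, $n^{-1}\hat{\bm{\Sigma}}_3\xrightarrow{P}\bm{\Sigma}$ and $n^{-1}\hat{\bm{\Sigma}}_4=\bm{I}_p$ to conclude that the projection factors satisfy $\hat{\bm{\Sigma}}_i^{-1}\bm{R}_1'[\bm{R}_1\hat{\bm{\Sigma}}_i^{-1}\bm{R}_1']^{-1}\xrightarrow{P}\bm{Q}_i^{-1}\bm{R}_1'[\bm{R}_1\bm{Q}_i^{-1}\bm{R}_1']^{-1}$, where $\bm{Q}_2=\bm{\Sigma}\bm{K}$, $\bm{Q}_3=\bm{\Sigma}$, $\bm{Q}_4=\bm{I}_p$; here the two powers of $n$ cancel between the numerator and the inverse in the denominator, so no separate normalization is needed. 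Passing to $\text{vec}$ form and applying $\text{vec}(\bm{R}_1\bm{U}\bm{R}_2)=(\bm{R}_2'\otimes\bm{R}_1)\text{vec}(\bm{U})$ turns the two-sided multiplication into a Kronecker product; the resulting coefficient multiplying $\text{vec}((\hat{\bm{B}}_1-\bm{B})')$ is precisely $\bm{A}_i$ as defined just before the statement, and the deterministic shift is precisely $\bm{\mu}_i$.

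Stacking the four blocks, I would write $n^{1/2}\big(\text{vec}((\hat{\bm{B}}_1-\bm{B})'),\dots,\text{vec}((\hat{\bm{B}}_4-\bm{B})')\big)'=\bm{T}_n\,\text{vec}\big(n^{1/2}(\hat{\bm{B}}_1-\bm{B})'\big)+\bm{b}_n$ with $\bm{T}_n\xrightarrow{P}\bm{T}$, whose block rows are $\bm{A}_1,\bm{A}_2,\bm{A}_3,\bm{A}_4$, and $\bm{b}_n\to(\bm{0}',\bm{\mu}_2',\bm{\mu}_3',\bm{\mu}_4')'$. Since $n^{1/2}(\hat{\bm{B}}_1-\bm{B})\xrightarrow{d}\eta_1$ by Theorem~\ref{theorhH} with $\text{vec}(\eta_1')=\bm{A}_1\bm{\xi}$ and $\bm{\xi}\sim\mathcal{N}(\bm{0},\bm{\Lambda})$, Slutsky's theorem yields joint convergence to the Gaussian vector whose $i$-th block is $\bm{A}_i\bm{\xi}+\bm{\mu}_i$. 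This is multivariate normal with the stated mean vector and with cross-covariances $\text{Cov}(\bm{A}_i\bm{\xi}+\bm{\mu}_i,\bm{A}_j\bm{\xi}+\bm{\mu}_j)=\bm{A}_i\bm{\Lambda}\bm{A}_j'=\bm{\Sigma}_{ij}$, which completes the identification.

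The main obstacle I anticipate is not any individual limit step---each scaled weight converges by the preliminary computations quoted from Jain~{\em et al.}~(2011)---but the bookkeeping that produces the \emph{joint} law, in particular the off-diagonal cross-covariances $\bm{\Sigma}_{ij}$ for distinct $i,j\in\{2,3,4\}$. These go beyond Theorem~\ref{propjointclassloc}, which delivers only the pairwise law of $\hat{\bm{B}}_1$ with one restricted estimator at a time. The clean resolution is the single-driver representation above: once every estimator is exhibited as an affine image of the \emph{one} common limit $\bm{\xi}$, the full $4p\times q$ covariance block is forced to have the form $\bm{A}_i\bm{\Lambda}\bm{A}_j'$ and no independent evaluation of the cross terms is required. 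I would take particular care that the Kronecker-ordering and transpose conventions are applied consistently with $\bm{A}_1=(\bm{\Sigma}\bm{K})^{-1}\otimes\bm{I}_q$, since a mismatch there is the most likely source of error.
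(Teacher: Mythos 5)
Your proposal is correct and follows essentially the same route as the paper: the paper likewise writes each $n^{1/2}(\hat{\bm{B}}_i-\bm{B})$ as an affine transformation of the single driver $n^{1/2}(\hat{\bm{B}}_1-\bm{B})$ with coefficients $\bm{G}_{in},\bm{P}_n$ converging in probability, and then invokes Lemma~\ref{lemmajointnorm}, whose proof is exactly the vectorize-and-apply-Slutsky argument you inline. The only cosmetic difference is that you make the representation $\mathrm{vec}(\eta_1')=\bm{A}_1\bm{\xi}$ with $\bm{\xi}\sim\mathcal{N}(\bm{0},\bm{\Lambda})$ explicit so the cross-covariances $\bm{A}_i\bm{\Lambda}\bm{A}_j'$ drop out directly, whereas the paper packages this bookkeeping into the lemma.
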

The proof of this theorem is given in the Appendix. Since the
sequence of local alternative includes as a special case the
restriction, one deduces the following corollary.
\begin{corol}
  If Assumption 1 holds and  $\bm{R}_{1}\bm{B}\bm{R}_2=\bm{\theta}$, we have\\
   $n^\frac{1}{2}\left(\left(\hat{\bm{B}}_{1}-\bm{B}\right)',\,\left(\hat{\bm{B}}_{2}-\bm{B}\right)',
   \,\left(\hat{\bm{B}}_{3}-\bm{B}\right)',\,\left(\hat{\bm{B}}_{4}-\bm{B}\right)'\right)'
 \xrightarrow[n\rightarrow\infty]{d}\left(\bm{\eta}'_{1},\,\bm{\zeta}'_{2},\,\bm{\zeta}'_{3},\,\bm{\zeta}'_{4}\right)'$
 where
 \begin{eqnarray}
\left(
  \begin{array}{c}
    \bm{\eta}_{1} \\
    \bm{\zeta}_{2} \\
    \bm{\zeta}_{3} \\
    \bm{\zeta}_{4}
  \end{array}
\right)\sim \mathcal{N}_{4p\times q}\left(\left(
                                        \begin{array}{c}
                                          \bm{0} \\
                                          \bm{0} \\
                                          \bm{0} \\
                                          \bm{0} \\
                                        \end{array}
                                      \right),\,\left(
                                                  \begin{array}{cccc}
                                                    \bm{\Sigma}_{11} & \bm{\Sigma}_{12} & \bm{\Sigma}_{13} & \bm{\Sigma}_{14} \\
                                                    \bm{\Sigma}_{21} & \bm{\Sigma}_{22} & \bm{\Sigma}_{23} & \bm{\Sigma}_{24} \\
                                                    \bm{\Sigma}_{31} & \bm{\Sigma}_{32} & \bm{\Sigma}_{33} & \bm{\Sigma}_{34} \\
                                                    \bm{\Sigma}_{41} & \bm{\Sigma}_{32} & \bm{\Sigma}_{43} & \bm{\Sigma}_{44} \\
                                                  \end{array}
                                                \right)
\right).
\end{eqnarray}
\end{corol}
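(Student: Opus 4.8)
The plan is to obtain this corollary as an immediate specialization of Theorem~\ref{propjoinnorm}. First I would observe that the exact restriction $\bm{R}_{1}\bm{B}\bm{R}_2=\bm{\theta}$ in~\eqref{restriction} is precisely the member of the sequence of local alternative restrictions~\eqref{localaltern} corresponding to $\bm{\theta}_0=\bm{0}$, as already noted in the text following~\eqref{localaltern}. Hence, under the hypotheses of the corollary, the assumptions of Theorem~\ref{propjoinnorm} are satisfied with $\bm{\theta}_0=\bm{0}$, and the asserted joint convergence of $n^{1/2}\left(\left(\hat{\bm{B}}_{1}-\bm{B}\right)',\,\left(\hat{\bm{B}}_{2}-\bm{B}\right)',\,\left(\hat{\bm{B}}_{3}-\bm{B}\right)',\,\left(\hat{\bm{B}}_{4}-\bm{B}\right)'\right)'$ follows directly from that theorem.

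It then remains only to identify the limiting law obtained at $\bm{\theta}_0=\bm{0}$. The covariance blocks $\bm{\Sigma}_{ij}=\bm{A}_{i}\bm{\Lambda}\bm{A}^{'}_{j}$ are built from $\bm{A}_1,\dots,\bm{A}_4$ and $\bm{\Lambda}$, and the displayed formulas for $\bm{A}_2$, $\bm{A}_3$ and $\bm{A}_4$ show that these matrices depend only on $\bm{\Sigma}$, $\bm{K}$, $\bm{R}_1$, $\bm{R}_2$; in particular they do not involve $\bm{\theta}_0$, so the covariance matrix is unchanged. On the other hand, each of the mean matrices $\bm{\mu}_2$, $\bm{\mu}_3$ and $\bm{\mu}_4$ defined just before Theorem~\ref{propjoinnorm} contains $\bm{\theta}_0$ as a linear factor, the surrounding factors being fixed matrices; therefore substituting $\bm{\theta}_0=\bm{0}$ annihilates each of them. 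Denoting by $\bm{\zeta}_i$ the resulting limit of $n^{1/2}(\hat{\bm{B}}_i-\bm{B})$ for $i=2,3,4$, we thus obtain the zero mean vector together with exactly the covariance matrix of Theorem~\ref{propjoinnorm}, which is the stated distribution.

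Because the argument is nothing more than the substitution $\bm{\theta}_0=\bm{0}$ into an already-established limit theorem, there is no substantive obstacle. The only point meriting a line of verification is the claim that the covariance structure is free of $\bm{\theta}_0$, and this is immediate from inspecting the expressions for $\bm{A}_2$, $\bm{A}_3$ and $\bm{A}_4$.
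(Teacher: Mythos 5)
Your proposal is correct and follows exactly the paper's own argument: the corollary is obtained by substituting $\bm{\theta}_{0}=\bm{0}$ into Theorem~\ref{propjoinnorm}, which kills the means $\bm{\mu}_{2},\bm{\mu}_{3},\bm{\mu}_{4}$ while leaving the covariance blocks $\bm{\Sigma}_{ij}$ unchanged. Your extra remark verifying that the $\bm{A}_{i}$ (hence the covariance) do not involve $\bm{\theta}_{0}$ is a welcome, if minor, elaboration of the same one-line proof.
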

The proof follows directly from Theorem~\ref{propjoinnorm} by taking
$\bm{\theta}_{0}=0$.
\subsection{ Asymptotic Distributional Risk}
\label{sec:ADR} Asymptotic Distributional Risk (ADR) is one of the
important statistical tools to compare different estimators. In this
subsection, we derive ADR of the UE and that of any member of the
proposed class of the restricted estimators, i.e. ADR of $\hat{\bm{B}}_{1}$ and
$\tilde{\bm{B}}(\hat{\bm{\Sigma}})$. Recall that, if
$\sqrt{n}(\hat{\bm{\theta}}-\bm{\theta})\xrightarrow[n\rightarrow
\infty]{d}\bm{U}$, where $\hat{\bm{\theta}}$, $\bm{\theta}$ and
$\bm{U}$ are matrices.
The ADR is defined as\\
$\text{ADR}(\hat{\theta},\theta;W)=\text{E}[\text{tr}(U^{'}WU)]$,
where $W$ is a weighting matrix. For more details about the ADR, we
refer for example to Saleh (2006), Chen and Nkurunziza~(2015, 2016)
and references therein. To introduce some notations, let
$\bm{C}_3(\bm{Q}_{0})=\bm{Q}_{0}^{-1}\bm{R}^{'}_1(\bm{R}_{1}\bm{Q}_{0}^{-1}\bm{R}^{'}_1)^{-1}$,
$\bm{C}_4=(\bm{R}^{'}_{2}\bm{R}_{2})^{-1}\bm{R}^{'}_{2}$,
$\bm{J}_1(\bm{Q}_{0})=\bm{C}_3(\bm{Q}_{0})\bm{R}_{1}\bm{Q}_{0}^{-1}$
and $\bm{J}=\bm{R}_{2}\bm{C}_4$.

\begin{theo}\label{propADR}
Suppose that the conditions of Theorem~\ref{propjointclassloc} hold,
then
\begin{eqnarray*}
& &\text{\em ADR}(\hat{\bm{B}}_1,\bm{B};W)=\text{\em tr}((W \otimes \bm{I}_{q})(A_1\Lambda  \bm{A}'_{1})),\\
%\end{eqnarray*}
%\begin{eqnarray*}
& & \text{\em ADR}(\tilde{\bm{B}}(\hat{\bm{\Sigma}}),\bm{B},W) =\text{\em
ADR}(\hat{\bm{B}}_1,\bm{B},W)-f_1(\bm{Q}_{0})+(\text{\em
vec}(\theta_0))^{'}F_1(\bm{Q}_{0})\text{\em vec}(\theta_0),
\end{eqnarray*}
with \quad{ } $f_{1}(\bm{Q}_{0})=\text{\em
tr}(((J^{'}_1(\bm{Q}_{0})W\otimes J)\otimes I_{pq})\text{\em
vec}(((\Sigma K)^{-1}\otimes
I_q))(\text{\em vec}(\Lambda))^{'})$ \quad{ } and\\
$F_{1}(\bm{Q}_{0})= C^{'}_3(\bm{Q}_{0})WC_3(\bm{Q}_{0})\otimes
(\bm{R}^{'}_{2}\bm{R}_{2})^{-1}$.
\end{theo}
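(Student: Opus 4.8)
The plan is to read both risks directly off the joint limiting law in Theorem~\ref{propjointclassloc} and then reduce the resulting Gaussian moments to the stated closed forms using vec/Kronecker identities. The single tool used throughout is the elementary identity
\[
\text{tr}(\bm{U}'\bm{W}\bm{U})=\left(\text{vec}(\bm{U}')\right)'(\bm{W}\otimes \bm{I}_q)\,\text{vec}(\bm{U}'),
\]
valid for any $p\times q$ matrix $\bm{U}$ and any symmetric $p\times p$ matrix $\bm{W}$, together with the Gaussian second-moment formula: if $\bm{v}\sim\mathcal{N}(\bm{m},\bm{S})$ then $\text{E}[\bm{v}'\bm{M}\bm{v}]=\text{tr}(\bm{M}\bm{S})+\bm{m}'\bm{M}\bm{m}$. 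The vectorization must be taken row-wise (i.e.\ as $\text{vec}$ of the transpose) to match the convention used to build $\bm{\Lambda}$ out of $\text{vec}(h')$; under that convention $\text{vec}(\eta_1')$ has mean $\bm{0}$ and covariance $\bm{\Sigma}_{11}=\bm{A}_1\bm{\Lambda}\bm{A}_1'$, while $\text{vec}(\eta^{*'})$ has mean $\text{vec}(\bm{\mu}(\bm{Q}_0)')$ and covariance $\bm{\Sigma}_{22}(\bm{Q}_0)=\bm{A}_2(\bm{Q}_0)\bm{\Lambda}\bm{A}_2(\bm{Q}_0)'$.

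For the unrestricted estimator, since $\sqrt{n}(\hat{\bm{B}}_1-\bm{B})\xrightarrow[n\rightarrow\infty]{d}\eta_1$ with $\text{vec}(\eta_1')$ centered, the moment formula has no mean contribution, giving at once $\text{ADR}(\hat{\bm{B}}_1,\bm{B};\bm{W})=\text{tr}((\bm{W}\otimes\bm{I}_q)\bm{\Sigma}_{11})=\text{tr}((\bm{W}\otimes\bm{I}_q)(\bm{A}_1\bm{\Lambda}\bm{A}_1'))$, the first claim. For $\tilde{\bm{B}}(\hat{\bm{\Sigma}})$ the mean is nonzero, so the moment formula splits the risk into a variance trace $\text{tr}((\bm{W}\otimes\bm{I}_q)\bm{\Sigma}_{22}(\bm{Q}_0))$ and a bias term. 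The bias term is the cleaner one: writing $\bm{\mu}(\bm{Q}_0)=-\bm{C}_3(\bm{Q}_0)\bm{\theta}_0\bm{C}_4$ and vectorizing with $\text{vec}(\bm{A}\bm{B}\bm{C})=(\bm{C}'\otimes\bm{A})\text{vec}(\bm{B})$ gives $\text{vec}(\bm{\mu}(\bm{Q}_0)')=-(\bm{C}_3(\bm{Q}_0)\otimes\bm{C}_4')\text{vec}(\bm{\theta}_0')$; then the mixed-product rule $(\bm{A}\otimes\bm{B})(\bm{C}\otimes\bm{D})=\bm{A}\bm{C}\otimes\bm{B}\bm{D}$ together with the simplification $\bm{C}_4\bm{C}_4'=(\bm{R}_2'\bm{R}_2)^{-1}$ collapses the bias quadratic form into $(\text{vec}(\bm{\theta}_0))'F_1(\bm{Q}_0)\text{vec}(\bm{\theta}_0)$ with $F_1(\bm{Q}_0)=\bm{C}_3(\bm{Q}_0)'\bm{W}\bm{C}_3(\bm{Q}_0)\otimes(\bm{R}_2'\bm{R}_2)^{-1}$, exactly as stated.

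The remaining, genuinely laborious, step is to show that the variance trace equals $\text{ADR}(\hat{\bm{B}}_1,\bm{B};\bm{W})-f_1(\bm{Q}_0)$. The observation that organizes the algebra is that $\bm{A}_2(\bm{Q}_0)$ factors as $\bm{A}_2(\bm{Q}_0)=(\bm{I}_{pq}-\bm{\Pi})\bm{A}_1$ with $\bm{\Pi}=(\bm{C}_3(\bm{Q}_0)\bm{R}_1)\otimes(\bm{R}_2\bm{C}_4)$, obtained by factoring $(\bm{\Sigma}\bm{K})^{-1}\otimes\bm{I}_q$ out on the right via the mixed-product rule; hence $\bm{\Sigma}_{22}(\bm{Q}_0)=(\bm{I}_{pq}-\bm{\Pi})\bm{\Sigma}_{11}(\bm{I}_{pq}-\bm{\Pi})'$. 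Expanding $\text{tr}((\bm{W}\otimes\bm{I}_q)(\bm{I}-\bm{\Pi})\bm{\Sigma}_{11}(\bm{I}-\bm{\Pi})')$ isolates $\text{ADR}(\hat{\bm{B}}_1,\bm{B};\bm{W})$ as the leading term; the two cross terms coincide by the cyclic property of the trace and the symmetry of $\bm{W}$, $\bm{\Lambda}$ and $\bm{\Sigma}\bm{K}=\bm{\Sigma}-\sigma_\delta^2\bm{I}_p$, and together with the quadratic $\bm{\Pi}$-term they must be recast as the single expression $f_1(\bm{Q}_0)$. I expect this last conversion to be the main obstacle: it requires turning traces of products of Kronecker-structured matrices against $\bm{\Lambda}$ into the nested form $\text{tr}\big(((\bm{J}_1(\bm{Q}_0)'\bm{W}\otimes\bm{J})\otimes\bm{I}_{pq})\,\text{vec}((\bm{\Sigma}\bm{K})^{-1}\otimes\bm{I}_q)(\text{vec}(\bm{\Lambda}))'\big)$, via repeated use of $\text{tr}(\bm{M}\,\bm{u}\bm{v}')=\bm{v}'\bm{M}\bm{u}$ and $\text{vec}(\bm{A}\bm{X}\bm{B})=(\bm{B}'\otimes\bm{A})\text{vec}(\bm{X})$ to expose $\text{vec}(\bm{\Lambda})$; here the relation $\bm{C}_3(\bm{Q}_0)\bm{R}_1=\bm{J}_1(\bm{Q}_0)\bm{Q}_0$ is what brings $\bm{J}_1(\bm{Q}_0)$ into the formula, and the idempotency of the two Kronecker factors of $\bm{\Pi}$ is what lets the three residual terms consolidate into a single $f_1(\bm{Q}_0)$.
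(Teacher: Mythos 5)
Your proposal follows essentially the same route as the paper's own proof: both read the two risks off the joint limiting law of Theorem~\ref{propjointclassloc} via the Gaussian quadratic-form identity, split the restricted risk into the variance trace of $\bm{\Sigma}_{22}(\bm{Q}_{0})$ plus the bias term $\text{tr}(\bm{\mu}'(\bm{Q}_{0})W\bm{\mu}(\bm{Q}_{0}))$, expand $\bm{A}_2(\bm{Q}_{0})=(\bm{I}-\bm{\Pi})\bm{A}_1$ to isolate $\text{ADR}(\hat{\bm{B}}_1,\bm{B};W)$ and three residual Kronecker-trace terms, and reduce the bias term to the $F_1(\bm{Q}_{0})$ quadratic form via $\bm{C}_4\bm{C}_4'=(\bm{R}_2'\bm{R}_2)^{-1}$. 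The final consolidation of the residual terms into $f_1(\bm{Q}_{0})$ is left to ``algebraic computations'' in the paper as well, so your treatment matches it in both method and level of detail.
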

The proof of this theorem follows from Theorem~\ref{propjointclass}.
For the convenience of the reader, it is also outlined in the
Appendix.
\subsection{Risk
Analysis}\label{sec:riskanalysis} In this section, we compare
$\text{ADR}(\tilde{\bm{B}}(\hat{\bm{\Sigma}}),\bm{B},W)$ and
$\text{ADR}(\hat{\bm{B}}_1,\bm{B};W)$ in order to evaluate the
relative performance of $\tilde{\bm{B}}(\hat{\bm{\Sigma}})$ and
$\hat{\bm{B}}_1$. To simply some notations, for a given symmetric
matrix $A$, let $\textrm{ch}_{\min}(A)$ and $\textrm{ch}_{\max}(A)$
be, respectively, the smallest and largest eigenvalues of $A$.
\begin{theo}\label{compareADR}
Suppose that the conditions of Theorem~\ref{propADR} hold. If
$||\theta_0||^2< \displaystyle\frac{f_1(\bm{Q}_{0})}{\textrm{\em
ch}_{max}(F_1(\bm{Q}_{0}))}$, then $\text{\em
ADR}(\tilde{\bm{B}}(\hat{\bm{\Sigma}}),\bm{B},W)\leq \text{\em
ADR}(\hat{\bm{B}}_1,\bm{B};W)$.\quad{ }  If
$||\theta_0||^2>\displaystyle\frac{f_1(\bm{Q}_{0})}{\textrm{\em
ch}_{min}(F_1(\bm{Q}_{0}))}$,\\ then $\text{\em
ADR}(\tilde{\bm{B}}(\hat{\bm{\Sigma}}),\bm{B},W)>\text{\em
ADR}(\hat{\bm{B}}_1,\bm{B};W)$.
\end{theo}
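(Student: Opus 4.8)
The plan is to reduce the comparison of the two risks to the sign of a single scalar and then control that scalar by a two-sided eigenvalue estimate. By Theorem~\ref{propADR},
\begin{equation*}
\text{ADR}(\tilde{\bm{B}}(\hat{\bm{\Sigma}}),\bm{B},W)-\text{ADR}(\hat{\bm{B}}_1,\bm{B};W)=(\text{vec}(\theta_0))^{'}F_1(\bm{Q}_{0})\,\text{vec}(\theta_0)-f_1(\bm{Q}_{0}),
\end{equation*}
so it suffices to compare the quadratic form $g(\theta_0):=(\text{vec}(\theta_0))^{'}F_1(\bm{Q}_{0})\text{vec}(\theta_0)$ with $f_1(\bm{Q}_{0})$. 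As a first step I would record the identity $\|\text{vec}(\theta_0)\|^2=\|\theta_0\|^2$, which connects the Euclidean norm of the vectorisation to the norm of $\theta_0$ appearing in the hypotheses.

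The next step is to verify that $F_1(\bm{Q}_{0})=C^{'}_3(\bm{Q}_{0})WC_3(\bm{Q}_{0})\otimes(\bm{R}^{'}_{2}\bm{R}_{2})^{-1}$ is symmetric and positive definite, so that its eigenvalues $\text{ch}_{\min}(F_1(\bm{Q}_{0}))$ and $\text{ch}_{\max}(F_1(\bm{Q}_{0}))$ are well defined and strictly positive. Since $\bm{R}_{1}$ has rank $r_1$ and $\bm{Q}_{0}$ is positive definite, the matrix $C_3(\bm{Q}_{0})=\bm{Q}_{0}^{-1}\bm{R}^{'}_1(\bm{R}_{1}\bm{Q}_{0}^{-1}\bm{R}^{'}_1)^{-1}$ has full column rank $r_1$, so $C^{'}_3(\bm{Q}_{0})WC_3(\bm{Q}_{0})$ is positive definite whenever $W$ is; and $(\bm{R}^{'}_{2}\bm{R}_{2})^{-1}$ is positive definite because $\bm{R}_{2}$ has rank $r_2$ by Assumption~($\mathcal{A}_6$). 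As the Kronecker product of two symmetric positive definite matrices, $F_1(\bm{Q}_{0})$ is then itself symmetric positive definite.

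With positive definiteness in hand, I would invoke the Rayleigh--Ritz bounds: for any conformable vector $x$,
\begin{equation*}
\text{ch}_{\min}(F_1(\bm{Q}_{0}))\,\|x\|^2\le x^{'}F_1(\bm{Q}_{0})x\le\text{ch}_{\max}(F_1(\bm{Q}_{0}))\,\|x\|^2.
\end{equation*}
Taking $x=\text{vec}(\theta_0)$ and using $\|\text{vec}(\theta_0)\|^2=\|\theta_0\|^2$ gives $g(\theta_0)\le\text{ch}_{\max}(F_1(\bm{Q}_{0}))\|\theta_0\|^2$ and $g(\theta_0)\ge\text{ch}_{\min}(F_1(\bm{Q}_{0}))\|\theta_0\|^2$. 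Under the first hypothesis $\|\theta_0\|^2<f_1(\bm{Q}_{0})/\text{ch}_{\max}(F_1(\bm{Q}_{0}))$, the upper bound yields $g(\theta_0)<f_1(\bm{Q}_{0})$, so the displayed difference is negative and $\tilde{\bm{B}}(\hat{\bm{\Sigma}})$ dominates; under the second hypothesis $\|\theta_0\|^2>f_1(\bm{Q}_{0})/\text{ch}_{\min}(F_1(\bm{Q}_{0}))$, the lower bound yields $g(\theta_0)>f_1(\bm{Q}_{0})$, so the difference is positive and $\hat{\bm{B}}_1$ dominates.

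The computation is short once Theorem~\ref{propADR} is available; the one point requiring genuine care is the positive definiteness of $F_1(\bm{Q}_{0})$, since the whole two-sided sandwich rests on $\text{ch}_{\min}(F_1(\bm{Q}_{0}))>0$. I therefore expect the rank conditions in Assumption~($\mathcal{A}_6$), together with the positive definiteness of $W$ and $\bm{Q}_{0}$, to be the real workhorse of the argument, the remainder being the standard quadratic-form estimate displayed above.
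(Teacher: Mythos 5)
Your argument is correct and follows essentially the same route as the paper: both reduce the comparison, via Theorem~\ref{propADR}, to the sign of $(\text{vec}(\theta_0))^{'}F_1(\bm{Q}_{0})\text{vec}(\theta_0)-f_1(\bm{Q}_{0})$ and then control the quadratic form with the Courant (Rayleigh--Ritz) eigenvalue bounds. Your write-up is the cleaner forward implication of the paper's slightly roundabout derivation, and your explicit check that $F_1(\bm{Q}_{0})$ is positive definite (so that $\textrm{ch}_{\min}>0$) is a point the paper leaves implicit.
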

The proof of this theorem is given in the Appendix.
%\section{Numerical results}\label{sec:numerical}
%\subsection{Simulation studies}
\begin{remark}
Since $\text{\em ADR}(\tilde{\bm{B}}(\hat{\bm{\Sigma}}),\bm{B},W)$
and  $\text{\em ADR}(\hat{\bm{B}}_1,\bm{B};W)$ are positive real
numbers,\\ $\text{\em
ADR}(\tilde{\bm{B}}(\hat{\bm{\Sigma}}),\bm{B},W)\leq \text{\em
ADR}(\hat{\bm{B}}_1,\bm{B};W)$ iff  $\text{\em
ADR}(\hat{\bm{B}}_1,\bm{B};W)\big /\text{\em
ADR}(\tilde{\bm{B}}(\hat{\bm{\Sigma}}),\bm{B},W)\geqslant1$. This
ratio is known as the mean squares relative efficiency (RE). In
presenting the simulation results, we compare the estimators by
using the RE.
\end{remark}

\section{Concluding Remarks}\label{sec:conclusion}
In this paper, we study the asymptotic properties of the UE and the
restricted estimators of the regression coefficients of multivariate
regression model with measurement errors, when the coefficients may
satisfy some restrictions. In comparison with the findings in
literature, we generalize Proposition~A.10 and Corollary~A.2 in Chen
and Nkurunziza~(2016). Further, we generalize Theorem~4.1 of
Jain~{\em et al.}~(2011) in three ways. First, we derive the joint
asymptotic distribution between the UE and any member of the class
of the restricted estimators under the restriction. Recall that, in
the quoted paper, only the marginal asymptotic normality is derived
under the restriction. Second,  we derive the joint asymptotic
normality between the UE and any member of the class of the
restricted estimators under the sequence of local alternative
restrictions. Third, we establish the joint asymptotic distribution
between the UE and the three restricted estimators, given in
Jain~{\em et al.}~(2011), under the restriction and under the
sequence of local alternative restrictions. Further, we establish
the ADR of the UE and the ADR of any member of the class of
restricted estimators under the sequence of local alternative
restrictions. We also study the risk analysis and establish that the
restricted estimators perform better than the unrestricted estimator
in the neighborhood of the restriction. %This theoretical result has
%also been confirmed by the numerical results from
%simulation studies. % and real data set.

\section*{Acknowledgement}
The authors would like to acknowledge the financial support received from
the Natural Sciences and Engineering Research Council of Canada (NSERC).
\appendix
\section{Some technical results}

In this appendix, we give technical results and proofs which are
underlying the established results.
%\begin{proof}[Proof of Proposition~\ref{propG2hat}] First, we expand $\hat{\mathcal{G}}_2$. After simplification, we get
%\begin{eqnarray*}
%\hat{\mathcal{G}}_2
%=\text{tr}(\bm{Z}^{'}\bm{Z}-\bm{Z}^{'}\bm{X}\bm{B}-\bm{B}^{'}\bm{X}^{'}\bm{Z}+\bm{B}^{'}X^{'}\bm{X}\bm{B})-
%\text{tr}(\bm{B}^{'}(\bm{X}^{'}\bm{X})B-\bm{B}^{'}(\bm{X}^{'}\bm{X})K_{X}\bm{B}),
%\end{eqnarray*}
%then, %
%$\hat{\mathcal{G}}_2=\text{tr}(\bm{Z}^{'}\bm{Z})+\text{tr}(\bm{B}^{'}(\bm{X}^{'}\bm{X})K_{X}\bm{B}-2\bm{Z}^{'}\bm{X}\bm{B})$.
%%\end{eqnarray}
%Then, we consider to simplify\\
%$\text{tr}(\bm{B}^{'}(\bm{X}^{'}\bm{X})K_{X}\bm{B}-2\bm{Z}^{'}\bm{X}\bm{B})$.
%With some computations, we
% have
%\begin{eqnarray*}
%\text{tr}(\bm{B}^{'}(\bm{X}^{'}\bm{X})K_{X}\bm{B}-2\bm{Z}^{'}\bm{X}\bm{B})=\text{tr}((\hat{\bm{B}}_1-\bm{B})^{'}(\bm{X}^{'}\bm{X})K_X(\hat{\bm{B}}_1-\bm{B}))\nonumber\\
%-\text{tr}(\hat{\bm{B}}^{'}_1(\bm{X}^{'}\bm{X})K_{X}\bm{B}+\bm{B}^{'}(\bm{X}^{'}\bm{X})K_X\hat{\bm{B}}_1-2\bm{Z}^{'}\bm{X}\bm{B}),
%\end{eqnarray*}
%and then,
%\begin{eqnarray*}
%\text{tr}(\bm{B}^{'}(\bm{X}^{'}\bm{X})K_{X}\bm{B}-2\bm{Z}^{'}\bm{X}\bm{B})=\text{tr}((\hat{\bm{B}}_1-\bm{B})^{'}(\bm{X}^{'}\bm{X})K_X(\hat{\bm{B}}_1-\bm{B}))\nonumber\\
%-\text{tr}(\hat{\bm{B}}^{'}_1(\bm{X}^{'}\bm{X})K_{X}\bm{B}+\hat{\bm{B}}^{'}_1K^{'}_X(\bm{X}^{'}\bm{X})B-2\bm{Z}^{'}\bm{X}\bm{B}).
%\end{eqnarray*}
%Then, it suffices to verify that
%$\text{tr}(\hat{\bm{B}}^{'}_1(\bm{X}^{'}\bm{X})K_{X}\bm{B}+\hat{\bm{B}}^{'}_1K^{'}_X(\bm{X}^{'}\bm{X})B-2\bm{Z}^{'}\bm{X}\bm{B})=0,$
%this completes the proof.
%\end{proof}
The following lemma is useful in establishing the asymptotic
distributions.
\begin{lem}\label{lemmanorm}
 Let $Y$ be a $p\times q$ random matrix and $\bm{Y}\sim \mathcal{N}_{p\times q}(O,\bm{\Lambda})$, with $\bm{\Lambda}$ a $pq\times pq$ matrix.
 For $j=1,2, \dots, m$, let $\kappa_{j}$ and $\alpha_{j}$ be $p\times p-$ nonrandom matrices, let $\iota_{j}$ and $\beta_{j}$ be
 $q\times q$-nonrandom matrices, and let $\varrho_{j}$ be
 $p\times q$-nonrandom matrices.
Then

$
\left( \begin{array}{ccc}
\kappa_{1}\bm{Y}\iota_{1}+\alpha_1\bm{Y}\beta_1+\varrho_1\\
\kappa_{2}\bm{Y}\iota_{2}+\alpha_2\bm{Y}\beta_2+\varrho_2\\
\vdots \\
\kappa_{m}\bm{Y}\iota_{m}+\alpha_m \bm{Y}\beta_m+\varrho_m
\end{array} \right)
\sim \mathcal{N}_{mq\times p}$ $ \left( \left(\begin{array}{c}
$$\varrho_1$$ \\
$$\varrho_2$$\\
\vdots\\
$$\varrho_{m}$$
\end{array} \right),
\left( \begin{array}{cccc}
\bm{A}_{11} & \bm{A}_{12}&\cdots& \bm{A}_{1m} \\
\bm{A}_{21} & \bm{A}_{22}& \cdots& \bm{A}_{2m}\\
\vdots & \cdots & \cdots & \vdots\\
\bm{A}_{m1} & \bm{A}_{m2}& \cdots& \bm{A}_{mm}
\end{array}
\right) \right),$ where $
 \quad{ } \bm{A}_{ji}=(\bm{A}_{ij})^{'}, i,j=1,2,\dots,m,$ and
\begin{eqnarray*}
%A_{ii}&=&\Lambda+\Lambda(\bm{B}^{'}_i\otimes C_i)+(B_i\otimes C^{'}_i)\Lambda+(B_i\otimes C^{'}_i)\Lambda(\bm{B}^{'}_i\otimes C_i);\\
& &
\bm{A}_{ij}=(\kappa_{i}\otimes\iota'_{i})\Lambda(\kappa'_{j}\otimes\iota_{j})+(\kappa_{i}\otimes\iota'_{i})\Lambda(\alpha'_j\otimes
\beta_j)
 +(\alpha_i\otimes \beta'_i)\bm{\Lambda}(\alpha^{'}_j\otimes \beta_j)\\&& \quad{ }\quad{ }\quad{ } +(\alpha_i\otimes \beta^{'}_i)\Lambda(\alpha^{'}_j\otimes \beta_j).
\end{eqnarray*}
\end{lem}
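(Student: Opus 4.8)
The plan is to reduce the matrix-variate statement to an ordinary multivariate normal statement by vectorizing, and then to read off the mean and covariance from the linearity of the vec operator. The key algebraic tool is the identity $\text{vec}((AXB)')=(A\otimes B')\,\text{vec}(X')$, valid for conformable matrices, which matches the Kronecker-product pattern $\kappa_i\otimes\iota_i'$ appearing in the statement. Accordingly, I would first set $W=\text{vec}(Y')$, so that $W\sim\mathcal{N}_{pq}(0,\bm{\Lambda})$ by the definition of the matrix normal law $\mathcal{N}_{p\times q}(O,\bm{\Lambda})$.

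Next, for each $j=1,\dots,m$, write $T_j=\kappa_jY\iota_j+\alpha_jY\beta_j+\varrho_j$ and apply the vec-of-transpose identity termwise to obtain $\text{vec}(T_j')=M_jW+\text{vec}(\varrho_j')$, where $M_j=(\kappa_j\otimes\iota_j')+(\alpha_j\otimes\beta_j')$ is a fixed $pq\times pq$ matrix. Stacking the $m$ blocks shows that the vectorized left-hand side equals $\mathcal{M}W+c$, with $\mathcal{M}=(M_1',\dots,M_m')'$ and $c$ the stacked constant vector. Since an affine image of a Gaussian vector is Gaussian, joint normality follows immediately, and the mean of the $j$-th block is $\text{E}[\text{vec}(T_j')]=\text{vec}(\varrho_j')$, i.e.\ $\varrho_j$ in matrix form, because $\text{E}[W]=0$.

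It remains to compute the covariance blocks $\bm{A}_{ij}=\text{Cov}(\text{vec}(T_i'),\text{vec}(T_j'))=M_i\bm{\Lambda}M_j'$. Using $(A\otimes B)'=A'\otimes B'$ to transpose $M_j$ and expanding the product of the two two-term sums yields the four Kronecker terms
\[
\bm{A}_{ij}=(\kappa_i\otimes\iota_i')\bm{\Lambda}(\kappa_j'\otimes\iota_j)+(\kappa_i\otimes\iota_i')\bm{\Lambda}(\alpha_j'\otimes\beta_j)+(\alpha_i\otimes\beta_i')\bm{\Lambda}(\kappa_j'\otimes\iota_j)+(\alpha_i\otimes\beta_i')\bm{\Lambda}(\alpha_j'\otimes\beta_j),
\]
which is the asserted expression, the third summand being the mixed term $(\alpha_i\otimes\beta_i')\bm{\Lambda}(\kappa_j'\otimes\iota_j)$. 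The symmetry relation $\bm{A}_{ji}=\bm{A}_{ij}'$ is then automatic from $\bm{A}_{ji}=M_j\bm{\Lambda}M_i'=(M_i\bm{\Lambda}M_j')'$, using that $\bm{\Lambda}$ is symmetric.

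Since the argument is purely a matter of applying one vec identity and multiplying out Kronecker products, there is no genuine analytic obstacle; the only point requiring care is consistency of the vectorization convention. The Kronecker order $\kappa_i\otimes\iota_i'$ (rather than $\iota_i'\otimes\kappa_i$) forces one to vectorize $Y'$ rather than $Y$ throughout, and the same convention must be used when identifying $\bm{\Lambda}$ as the covariance of $\text{vec}(Y')$ as in the definition of $\bm{\Lambda}$ preceding Theorem~\ref{theorhH}; mixing the two conventions is the main way the bookkeeping can go wrong.
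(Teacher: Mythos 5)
Your proposal is correct and follows essentially the same route as the paper: vectorize the stacked blocks, write the result as an affine transformation $\mathcal{M}\,\mathrm{vec}(\bm{Y})+c$ of a Gaussian vector, and read off the mean and the covariance blocks $M_i\bm{\Lambda}M_j'$. You simply carry out explicitly the ``algebraic computations'' the paper leaves to the reader, and in doing so you correctly identify the third covariance summand as the mixed term $(\alpha_i\otimes\beta_i')\bm{\Lambda}(\kappa_j'\otimes\iota_j)$ (the lemma's displayed formula repeats the fourth term instead, an evident typo) and rightly flag that the Kronecker order $\kappa_i\otimes\iota_i'$ ties the statement to the $\mathrm{vec}$-of-transpose convention used elsewhere in the paper.
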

\begin{proof}
We have\\
 vec$ \left(\left( \begin{array}{c}
\kappa_{1}\bm{Y}\iota_{1}+\alpha_1\bm{Y}\beta_1+\varrho_1\\
\kappa_{2}\bm{Y}\iota_{2}+\alpha_2\bm{Y}\beta_2+\varrho_2\\
\vdots \\
\kappa_{m}\bm{Y}\iota_{m}+\alpha_m\bm{Y}\beta_m+\varrho_m
\end{array} \right)\right)= \left(\begin{array}{c}
\kappa_{1}\otimes \iota'_{1}+\alpha_1\otimes \beta^{'}_1\\
\kappa_{2}\otimes \iota'_{2}+\alpha_2\otimes \beta^{'}_2\\
\vdots\\
 \kappa_{m}\otimes \iota'_{m}+\alpha_m\otimes \beta^{'}_m
\end{array} \right)\textrm{vec}(\bm{Y})+\left( \begin{array}{c}
\text{vec}(\varrho_1)\\
\text{vec}(\varrho_2)\\
\vdots\\
\text{vec}(\varrho_m)
\end{array} \right)
$ then the rest of the proof follows from the properties of normal
random vectors along with some algebraic computations,  this
completes the proof.
\end{proof}
Note that this result is more general than Corollary~A.2 in Chen and
Nkurunziza~(2016). By using this lemma, we establish the following
lemma, which is more general than Proposition~A.10 and Corollary~A.2
in Chen and Nkurunziza~(2016).The established lemma is particularly
useful in deriving the joint asymptotic normality between
$\hat{\bm{B}}_1$, $\hat{\bm{B}}_2$,$\hat{\bm{B}}_3$ and
$\hat{\bm{B}}_4$.

\begin{lem}\label{lemmajointnorm}
 For $j=1,2,\dots,m$, let $\{\kappa_{jn}\}_{n=1}^{\infty}$, $\{\iota_{jn}\}_{n=1}^{\infty}$
$\{\alpha_{jn}\}_{n=1}^{\infty}$,$\{\beta_{jn}\}_{n=1}^{\infty}$,
$\{\varrho_{jn}\}_{n=1}^{\infty}$,  be sequences of random matrices
such that $\kappa_{jn}\xrightarrow[n\rightarrow
\infty]{P}\kappa_{j}$, $\iota_{jn}\xrightarrow[n\rightarrow
\infty]{P}\iota_{j}$, $\alpha_{jn}\xrightarrow[n\rightarrow
\infty]{P}\alpha_j$, $\beta_{jn}\xrightarrow[n\rightarrow
\infty]{P}\beta_j$, $\varrho_{jn}\xrightarrow[n\rightarrow
\infty]{P}\varrho_j$,  where, for $j=1,2,\dots, m$, $\kappa_j$,
$\alpha_j$, $\iota_j$ and $\beta_j$,$\varrho_j$,  are non-random
matrices as defined in Lemma~\ref{lemmanorm}. If a sequence of
$p\times q$ random matrices $\{\bm{Y}_n\}_{n=1}^{\infty}$ is such
that $\bm{Y}_n\xrightarrow[n\rightarrow \infty]{d}\bm{Y}\sim
\mathcal{N}_{p\times q}(0,\Lambda)$,
where $\bm{\Lambda}$ is a $pq\times pq$ matrix. We have\\
$ \left( \begin{array}{ccc}
\kappa_{1n}\bm{Y}_n\iota_{1n}+\alpha_{1n}\bm{Y}_n\beta_{1n}+\varrho_{1n}\\
\kappa_{2n}\bm{Y}_n\iota_{2n}+\alpha_{2n}\bm{Y}_n\beta_{2n}+\varrho_{2n}\\
\vdots\\
\kappa_{mn}\bm{Y}_n\iota_{mn}+\alpha_{mn}\bm{Y}_n\beta_{mn}+\varrho_{mn}
\end{array} \right)
$$\xrightarrow[n\rightarrow\infty]{d} \bm{U}\sim \mathcal{N}_{mq\times p}
\left(\bm{\varrho},\,\bm{A} \right) $
\\
with $\bm{\varrho}= \left(\begin{array}{c}
$$\varrho_1$$ \\
$$\varrho_2$$\\
\vdots\\
$$\varrho_m$$
\end{array} \right)$,\, $\bm{A}=
\left( \begin{array}{c c c c}
\bm{A}_{11} & \bm{A}_{12}& \cdots & \bm{A}_{1m}\\
\bm{A}_{21} & \bm{A}_{22}& \cdots & \bm{A}_{2m}\\
\vdots & \cdots & \cdots & \vdots\\
\bm{A}_{m1} & \bm{A}_{m2}& \cdots & \bm{A}_{mm}
\end{array}
\right)
$,\\
where $\bm{A}_{ij}$, $i=1,2, \dots, m; j=1,2, \dots, m$ are as
defined in Lemma~\ref{lemmanorm}.
\end{lem}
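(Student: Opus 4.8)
The plan is to reduce the statement to the already-established Lemma~\ref{lemmanorm} by vectorizing each block and then invoking Slutsky's theorem together with the continuous mapping theorem. First I would vectorize exactly as in the proof of Lemma~\ref{lemmanorm}, using the identity $\text{vec}(AXB)=(A\otimes B')\text{vec}(X)$, to write
\[
\text{vec}\bigl(\kappa_{jn}\bm{Y}_n\iota_{jn}+\alpha_{jn}\bm{Y}_n\beta_{jn}+\varrho_{jn}\bigr)=\bigl(\kappa_{jn}\otimes\iota'_{jn}+\alpha_{jn}\otimes\beta'_{jn}\bigr)\text{vec}(\bm{Y}_n)+\text{vec}(\varrho_{jn}),
\]
for $j=1,2,\dots,m$. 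Stacking these blocks, the vectorization of the left-hand side of the asserted convergence equals $\bm{T}_n\,\text{vec}(\bm{Y}_n)+\bm{c}_n$, where $\bm{T}_n$ is the stacked matrix whose $j$-th block is $\kappa_{jn}\otimes\iota'_{jn}+\alpha_{jn}\otimes\beta'_{jn}$ and $\bm{c}_n$ is the stacked vector whose $j$-th block is $\text{vec}(\varrho_{jn})$.

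Second, I would establish the convergence of the random coefficients. Since the Kronecker product, matrix addition, and the $\text{vec}$ operator are continuous, and since $\kappa_{jn}\xrightarrow{P}\kappa_j$, $\iota_{jn}\xrightarrow{P}\iota_j$, $\alpha_{jn}\xrightarrow{P}\alpha_j$, $\beta_{jn}\xrightarrow{P}\beta_j$ and $\varrho_{jn}\xrightarrow{P}\varrho_j$, the continuous mapping theorem yields $\bm{T}_n\xrightarrow{P}\bm{T}$ and $\bm{c}_n\xrightarrow{P}\bm{c}$, where $\bm{T}$ and $\bm{c}$ are the analogous stacks built from the non-random limits. The essential feature here is that $\bm{T}$ and $\bm{c}$ are \emph{constant}.

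Third, I would combine these facts with the weak convergence $\text{vec}(\bm{Y}_n)\xrightarrow{d}\text{vec}(\bm{Y})$. Because $\bm{T}_n$ and $\bm{c}_n$ converge in probability to constants, Slutsky's theorem gives the joint convergence $\bigl(\bm{T}_n,\bm{c}_n,\text{vec}(\bm{Y}_n)\bigr)\xrightarrow{d}\bigl(\bm{T},\bm{c},\text{vec}(\bm{Y})\bigr)$, with the first two coordinates degenerate. Applying the continuous map $(A,c,y)\mapsto Ay+c$ and the continuous mapping theorem once more, I obtain $\bm{T}_n\,\text{vec}(\bm{Y}_n)+\bm{c}_n\xrightarrow{d}\bm{T}\,\text{vec}(\bm{Y})+\bm{c}$. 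Finally, the vector $\bm{T}\,\text{vec}(\bm{Y})+\bm{c}$ is precisely the vectorization of the stacked expression $\kappa_j\bm{Y}\iota_j+\alpha_j\bm{Y}\beta_j+\varrho_j$ of Lemma~\ref{lemmanorm} built from the non-random limiting coefficients; by that lemma its law is $\mathcal{N}(\bm{\varrho},\bm{A})$, so de-vectorizing gives $\bm{U}\sim\mathcal{N}_{mq\times p}(\bm{\varrho},\bm{A})$. The only delicate point is the interplay in the second and third steps: one must use convergence of the coefficient matrices to \emph{non-random} limits so that Slutsky's theorem legitimately applies to the matrix-vector product $\bm{T}_n\,\text{vec}(\bm{Y}_n)$, in which both factors are random; this is exactly where the hypothesis of convergence in probability to constant matrices is indispensable.
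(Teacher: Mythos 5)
Your proposal is correct and follows essentially the same route as the paper: vectorize each block via $\text{vec}(AXB)=(A\otimes B')\text{vec}(X)$, stack, apply Slutsky's theorem to the product of the probability-convergent coefficient stack with $\text{vec}(\bm{Y}_n)$, and identify the limiting law via Lemma~\ref{lemmanorm}. The paper's proof is the same argument, with your treatment of the joint convergence $(\bm{T}_n,\bm{c}_n,\text{vec}(\bm{Y}_n))$ merely spelled out more explicitly.
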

\begin{proof}
We have
 \begin{eqnarray*}
 \left( \begin{array}{c}
\text{vec}(\kappa_{1n}\bm{Y}_{n}\iota_{1n}+\alpha_{1n}\bm{Y}_{n}\beta_{1n}+\varrho_{1n})\\
\text{vec}(\kappa_{2n}\bm{Y}_{n}\iota_{2n}+\alpha_{2n}\bm{Y}_{n}\beta_{2n}+\varrho_{2n})\\
\vdots\\
\text{vec}(\kappa_{mn}\bm{Y}_{n}\iota_{mn}+\alpha_{mn}\bm{Y}_{n}\beta_{mn}+\varrho_{mn})
\end{array}
\right) = \left( \begin{array}{c}
\kappa_{1n}\otimes \iota'_{1n}+\alpha_{1n}\otimes \beta^{'}_{1n}\\
\kappa_{2n}\otimes \iota'_{2n}+\alpha_{2n}\otimes \beta^{'}_{2n}\\
\vdots\\
 \kappa_{mn}\otimes \iota'_{mn}+\alpha_{mn}\otimes \beta^{'}_{mn}
\end{array} \right)\text{vec}(\bm{Y}_n)\\+
\left( \begin{array}{c}
\text{vec}(\varrho_{1n})\\
\text{vec}(\varrho_{2n})\\
\vdots\\
\text{vec}(\varrho_{mn})
\end{array} \right),
\end{eqnarray*}
where $\text{vec}(\bm{Y}_n)\xrightarrow[n\rightarrow
\infty]{d}\text{vec}(\bm{Y})\sim \mathcal{N}_{pq}(0,\bm{\Lambda})$,
$\left(
\begin{array}{c}
\text{vec}(\varrho_{1n})\\
\text{vec}(\varrho_{2n})\\
\vdots\\
\text{vec}(\varrho_{mn})
\end{array} \right)\xrightarrow[n\rightarrow\infty]{P}\left( \begin{array}{c}
\text{vec}(\varrho_{1})\\
\text{vec}(\varrho_2)\\
\vdots\\
\text{vec}(\varrho_{m})
\end{array} \right), $ \\and  $\left(
\begin{array}{c}
\kappa_{1n}\otimes \iota'_{1n}+\alpha_{1n}\otimes \beta^{'}_{1n}\\
\kappa_{2n}\otimes \iota'_{2n}+\alpha_{2n}\otimes \beta^{'}_{2n}\\
\vdots\\
 \kappa_{mn}\otimes \iota'_{mn}+\alpha_{mn}\otimes \beta^{'}_{mn}
\end{array} \right)
\xrightarrow[n\rightarrow\infty]{P} \left( \begin{array}{c}
\kappa_{1}\otimes \iota'_{1}+\alpha_1\otimes \beta^{'}_1\\
\kappa_{2}\otimes \iota'_{2}+\alpha_2\otimes \beta^{'}_2\\
\vdots\\
 \kappa_{m}\otimes \iota'_{m}+\alpha_m\otimes \beta^{'}_m
\end{array} \right).$\\
Then, by using Slutsky's theorem, we have \\
vec$ \left( \begin{array}{ccc}
\kappa_{1n}\bm{Y}_{n}\iota_{1n}+\alpha_{1n}\bm{Y}_{n}\beta_{1n}+\varrho_{1n}\\
\kappa_{2n}\bm{Y}_{n}\iota_{2n}+\alpha_{2n}\bm{Y}_{n}\beta_{2n}+\varrho_{2n}\\
\vdots\\
\kappa_{mn}\bm{Y}_{n}\iota_{mn}+\alpha_{mn}\bm{Y}_{n}\beta_{mn}+\varrho_{mn}
\end{array} \right)
$ $\xrightarrow[n\rightarrow\infty]{d}\text{vec} \left(
\begin{array}{ccc}
\kappa_{1}\bm{Y}\iota_{1}+\alpha_1\bm{Y}\beta_1+\varrho_1\\
\kappa_{2}\bm{Y}\iota_{2}+\alpha_2\bm{Y}\beta_2+\varrho_2\\
\vdots \\
\kappa_{m}\bm{Y}\iota_{m}+\alpha_m\bm{Y}\beta_m+\varrho_m
\end{array} \right)$ and then\\
$ \left( \begin{array}{ccc}
\kappa_{1n}\bm{Y}_{n}\iota_{1n}+\alpha_{1n}\bm{Y}_{n}\beta_{1n}+\varrho_{1n}\\
\kappa_{2n}\bm{Y}_{n}\iota_{2n}+\alpha_{2n}\bm{Y}_{n}\beta_{2n}+\varrho_{2n}\\
\vdots\\
\kappa_{mn}\bm{Y}_{n}\iota_{mn}+\alpha_{mn}\bm{Y}_{n}\beta_{mn}+\varrho_{mn}
\end{array} \right)
$$\xrightarrow[n\rightarrow\infty]{d}
\left( \begin{array}{ccc}
\kappa_{1}\bm{Y}\iota_{1}+\alpha_1\bm{Y}\beta_1+\varrho_1\\
\kappa_{2}\bm{Y}\iota_{2}+\alpha_2\bm{Y}\beta_2+\varrho_2\\
\vdots \\
\kappa_{m}\bm{Y}\iota_{m}+\alpha_m\bm{Y}\beta_m+\varrho_m
\end{array} \right)\equiv \bm{U}$.\\ Then, the proof follows directly from Lemma~\ref{lemmanorm}. %\\
\end{proof}
 From this lemma, we establish the following corollary.
\begin{corol}\label{corollnorm}
Suppose that the conditions Lemma~\ref{lemmajointnorm} hold. We
have\\
$(\bm{Y}^{'}_n,(\bm{Y}_n+\alpha_{2n}\bm{Y}_n\beta_{2n}+\varrho_{2n})^{'})^{'}\xrightarrow[n\rightarrow\infty]{d}(\bm{Y}^{'},(\bm{Y}+\alpha_{2}
\bm{Y}\beta_{2}+\varrho_{2})^{'})^{'}$, with
\begin{eqnarray*}
\left( \begin{array}{ccc}
\bm{Y}\\
\bm{Y}+\alpha_{2} \bm{Y}\beta_{2}+\varrho_{2}
\end{array} \right)\sim \mathcal{N}_{2q\times p}
\left( \left(\begin{array}{c}
0 \\
\varrho_{2}
\end{array} \right),
\left( \begin{array}{c c}
\bm{V}_{11} & \bm{V}_{12} \\
\bm{V}_{21} & \bm{V}_{22}
\end{array}
\right)
\right)
\end{eqnarray*}
where $\bm{V}_{11}=\bm{\Lambda}$;\quad{ }
$\bm{V}_{12}=\bm{\Lambda}+\bm{\Lambda} (\alpha^{'}_{2}\otimes
\beta_{2})$;\quad{ }  $\bm{V}_{21}=(\bm{V}_{12})^{'}$;\\
$\bm{V}_{22}=(\bm{I}_{pq}+\alpha_{2}\otimes
\beta^{'}_{2})\bm{\Lambda}(\bm{I}_{pq}+\alpha^{'}_{2}\otimes
\beta_{2}).$
\end{corol}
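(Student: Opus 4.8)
The plan is to obtain this corollary as the two-block ($m=2$) specialization of Lemma~\ref{lemmajointnorm} under a judicious choice of the defining sequences. First I would take, in the notation of that lemma, the first transformation to be the identity map by setting $\kappa_{1n}=\bm{I}_p$, $\iota_{1n}=\bm{I}_q$, $\alpha_{1n}=\bm{0}$, $\beta_{1n}=\bm{0}$ and $\varrho_{1n}=\bm{0}$, so that $\kappa_{1n}\bm{Y}_n\iota_{1n}+\alpha_{1n}\bm{Y}_n\beta_{1n}+\varrho_{1n}=\bm{Y}_n$; and the second transformation by setting $\kappa_{2n}=\bm{I}_p$, $\iota_{2n}=\bm{I}_q$ with $\alpha_{2n},\beta_{2n},\varrho_{2n}$ as given, so that $\kappa_{2n}\bm{Y}_n\iota_{2n}+\alpha_{2n}\bm{Y}_n\beta_{2n}+\varrho_{2n}=\bm{Y}_n+\alpha_{2n}\bm{Y}_n\beta_{2n}+\varrho_{2n}$. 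These constant (hence trivially convergent) sequences satisfy the hypotheses of Lemma~\ref{lemmajointnorm}, with limits $\kappa_1=\bm{I}_p$, $\iota_1=\bm{I}_q$, $\alpha_1=\beta_1=\bm{0}$, $\varrho_1=\bm{0}$, and $\kappa_2=\bm{I}_p$, $\iota_2=\bm{I}_q$, $\alpha_2$, $\beta_2$, $\varrho_2$.

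Applying Lemma~\ref{lemmajointnorm} then yields immediately the claimed joint convergence in distribution of the stacked matrix to the announced normal law, its mean being the vertical stack of $\varrho_1=\bm{0}$ and $\varrho_2$, exactly as stated. It remains only to identify the covariance blocks $\bm{V}_{ij}$ of the corollary with the blocks $\bm{A}_{ij}$ supplied by Lemma~\ref{lemmanorm} evaluated at the chosen matrices.

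For that identification I would use that $\kappa_i\otimes\iota_i'=\bm{I}_{pq}$ for $i=1,2$ and that $\alpha_1\otimes\beta_1'=\bm{0}$. Substituting into the block formula of Lemma~\ref{lemmanorm}, most transparently in the factored form $\bm{A}_{ij}=\big((\kappa_i\otimes\iota_i')+(\alpha_i\otimes\beta_i')\big)\bm{\Lambda}\big((\kappa_j'\otimes\iota_j)+(\alpha_j'\otimes\beta_j)\big)$ provided by the vec representation in the proof of Lemma~\ref{lemmajointnorm}, the first block collapses to $\bm{A}_{11}=\bm{\Lambda}$, the off-diagonal block to $\bm{A}_{12}=\bm{\Lambda}+\bm{\Lambda}(\alpha_2'\otimes\beta_2)$, and the lower-right block to the sum of four terms which assemble into $\bm{A}_{22}=(\bm{I}_{pq}+\alpha_2\otimes\beta_2')\bm{\Lambda}(\bm{I}_{pq}+\alpha_2'\otimes\beta_2)$. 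Setting $\bm{V}_{11}=\bm{A}_{11}$, $\bm{V}_{12}=\bm{A}_{12}$, $\bm{V}_{22}=\bm{A}_{22}$, and using $\bm{V}_{21}=\bm{V}_{12}'$, reproduces the stated covariance and finishes the argument.

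There is no genuine obstacle here, since the corollary is a direct specialization of the preceding lemma; the only care needed is the Kronecker-product bookkeeping, namely verifying that the choices $\kappa_i=\bm{I}_p$, $\iota_i=\bm{I}_q$ reduce $(\kappa_i\otimes\iota_i')+(\alpha_i\otimes\beta_i')$ to $\bm{I}_{pq}+\alpha_i\otimes\beta_i'$, so that the four terms of $\bm{A}_{22}$ factor cleanly into the announced product, and that the vanishing of $\alpha_1\otimes\beta_1'$ removes the would-be cross terms in $\bm{A}_{11}$ and $\bm{A}_{12}$.
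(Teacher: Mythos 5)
Your proposal is correct and follows exactly the route the paper takes: the paper's own proof is the one-line remark that the corollary follows from Lemma~\ref{lemmajointnorm} with $m=2$, $\kappa_{jn}=\bm{I}_{p}$, $\iota_{jn}=\bm{I}_{q}$, $\alpha_{1n}=\beta_{1n}=\varrho_{1n}=\bm{0}$. Your explicit Kronecker bookkeeping for the blocks $\bm{V}_{ij}$ (using the factored form $(\bm{I}_{pq}+\alpha_i\otimes\beta_i')\bm{\Lambda}(\bm{I}_{pq}+\alpha_j'\otimes\beta_j)$, which is what the displayed but typo-ridden expression for $\bm{A}_{ij}$ in Lemma~\ref{lemmanorm} is meant to be) is a welcome elaboration of what the paper leaves implicit.
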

The proof follows directly from Lemma~\ref{lemmajointnorm} by taking
$m=2$, $\kappa_{jn}=\bm{I}_{p}$, $\iota_{jn}=\bm{I}_{q}$, $\alpha_{1n}=0$, $\beta_{1n}=0$ and $\varrho_{1n}=0$. %

\begin{proof}[Proof of Theorem~\ref{propjointclassloc}]
We have
\begin{eqnarray*}
(\hat{\bm{B}}(\hat{\bm{\Sigma}})-\bm{B})=(\hat{\bm{B}}_1-\bm{B})+(\hat{\bm{\Sigma}})^{-1}{\bm{R}^{'}_1}[\bm{R}_{1}(\hat{\bm{\Sigma}})^{-1}{\bm{R}^{'}_1}]^{-1}
(\theta-\bm{R}_{1}\hat{\bm{B}}_1\bm{R}_{2})({\bm{R}^{'}_{2}}\bm{R}_{2})^{-1}{\bm{R}^{'}_{2}}\\
=(\hat{\bm{B}}_1-\bm{B})-\bm{G}_{2n}(W_{0})\bm{R}_{1}\left(\hat{\bm{B}}_1-\bm{B}\right)\bm{R}_{2}\bm{P}_{n}+\bm{G}_{2n}(W_{0})(\bm{\theta}-\bm{R}_{1}B_1\bm{R}_{2})\bm{P}_{n}.
\end{eqnarray*}
with
$\bm{G}_{2n}(W_{0})=(\hat{\bm{\Sigma}})^{-1}{\bm{R}^{'}_1}[\bm{R}_{1}(\hat{\bm{\Sigma}})^{-1}{\bm{R}^{'}_1}]^{-1}$
and $\bm{P}_{n}=({\bm{R}^{'}_{2}}\bm{R}_{2})^{-1}{\bm{R}^{'}_{2}}$.
Then, since
\\$\bm{R}_{1}\bm{B}\bm{R}_2=\bm{\theta}+\bm{\theta}_{0}\big/\sqrt{n}$, this last relation
gives
\begin{eqnarray*}
n^{\frac{1}{2}}(\hat{\bm{B}}(\hat{\bm{\Sigma}})-\bm{B})=n^{\frac{1}{2}}(\hat{\bm{B}}_1-\bm{B})-
\bm{G}_{2n}(W_{0})\bm{R}_{1}\left(n^{\frac{1}{2}}(\hat{\bm{B}}_1-\bm{B})\right)\bm{R}_{2}\bm{P}_{n}-\bm{G}_{2n}(W_{0})\theta_{0}\bm{P}_{n}.
\end{eqnarray*}
Hence,
\begin{eqnarray*}
\left( \begin{array}{c}
\sqrt{n}(\hat{\bm{B}}_1-\bm{B})\\
\sqrt{n}(\hat{\bm{B}}(\hat{\bm{\Sigma}})-\bm{B})
\end{array} \right)
= \left( \begin{array}{c}
n^{\frac{1}{2}}(\hat{\bm{B}}_1-\bm{B})\\
n^{\frac{1}{2}}(\hat{\bm{B}}_1-\bm{B})-\bm{G}_{2n}(W_{0})\bm{R}_{1}\left(n^{\frac{1}{2}}(\hat{\bm{B}}_1-\bm{B})\right)\bm{R}_{2}\bm{P}_{n}
\end{array} \right)\\
+\left(
   \begin{array}{c}
     \bm{0} \\
     -\bm{G}_{2n}(W_{0})\bm{\theta}_{0}\bm{P}_{n} \\
   \end{array}
 \right).
\end{eqnarray*}
Note that $\bm{G}_{2n}\xrightarrow[n\rightarrow\infty]{P}
\bm{G}_2(\bm{Q}_{0})$ and
$\bm{P}_{n}\xrightarrow[n\rightarrow\infty]{P}P$, with
$\bm{G}_2(\bm{Q}_{0})=(\bm{Q}_{0})^{-1}\bm{R}^{'}_1(\bm{R}_{1}(\bm{Q}_{0})^{-1}\bm{R}^{'}_1)^{-1}$
and $\bm{P}=({\bm{R}^{'}_{2}}\bm{R}_{2})^{-1}{\bm{R}^{'}_{2}}$.
Further, let $\bm{\alpha}=\bm{G}_2(\bm{Q}_{0})\bm{R}_{1}$, let
$\bm{\beta}=\bm{R}_{2}P$ and let\\
$\bm{\varrho}=\bm{G}_{2}(\bm{Q}_{0})\bm{\theta}_{0}\bm{P}$. By using
Corollary~\ref{corollnorm}, we have
\begin{eqnarray*}
\left( \begin{array}{ccc}
\sqrt{n}(\hat{\bm{B}}_1-\bm{B})\\
\sqrt{n}(\hat{\bm{B}}(\hat{\bm{\Sigma}})-\bm{B})
\end{array} \right)&
\xrightarrow[n\rightarrow\infty]{d}& \left( \begin{array}{ccc}
\bm{Y}\\
\bm{Y}+\bm{\alpha} \bm{Y}\bm{\beta}+\bm{\varrho}
\end{array} \right)\\\, & & \sim \mathcal{N}_{2q\times p}
\left( \left(\begin{array}{c}
\bm{0} \\
\bm{\mu}\left(\bm{Q}_{0}\right)
\end{array} \right),
\left( \begin{array}{c c}
\bm{\Sigma}_{11} & \bm{\Sigma}_{12}\left(\bm{Q}_{0}\right) \\
\bm{\Sigma}_{21}\left(\bm{Q}_{0}\right) &
\bm{\Sigma}_{22}\left(\bm{Q}_{0}\right)
\end{array}
\right) \right),
\end{eqnarray*}
this completes the proof.
\end{proof}
\begin{proof}[Proof of Theorem~\ref{propjointclass}]
The proof follows from Theorem~\ref{propjointclassloc} by taking
$\bm{\mu}\left(\bm{Q}_{0}\right)=\bm{0}$.
\end{proof}
\begin{proof}[Proof of Theorem~\ref{propjoinnorm}]
 From (\ref{B2hat}), \eqref{B3} and \eqref{B4}, we have
\begin{eqnarray*}
n^{\frac{1}{2}}(\hat{\bm{B}}_2-\bm{B})=n^{\frac{1}{2}}(\hat{\bm{B}}_1-\bm{B})-n^{\frac{1}{2}}\bm{G}_{2n}\bm{R}_{1}(\hat{\bm{B}}_1-\bm{B})\bm{R}_{2}\bm{P}_{n}
+n^{\frac{1}{2}}\bm{G}_{2n}(\theta-\bm{R}_{1}\bm{B}\bm{R}_2)\bm{P}_{n}\\
n^{\frac{1}{2}}(\hat{\bm{B}}_3-\bm{B})=n^{\frac{1}{2}}(\hat{\bm{B}}_1-\bm{B})+n^{\frac{1}{2}}\bm{G}_{3n}(\theta-\bm{R}_{1}\hat{\bm{B}}_1\bm{R}_{2})\bm{P}_{n} \\
n^{\frac{1}{2}}(\hat{\bm{B}}_4-\bm{B})=n^{\frac{1}{2}}(\hat{\bm{B}}_1-\bm{B})-n^{\frac{1}{2}}\bm{G}_{4n}\bm{R}_{1}(\hat{\bm{B}}_1-\bm{B})\bm{R}_{2}\bm{P}_{n}
+n^{\frac{1}{2}}\bm{G}_{4n}(\theta-\bm{R}_{1}\bm{B}\bm{R}_2)\bm{P}_{n}.
\end{eqnarray*}
with
%\begin{eqnarray}
$\bm{G}_{2n}=(\bm{S} \bm{K}_{X})^{-1}{\bm{R}^{'}_1}[\bm{R}_{1}(\bm{S} \bm{K}_{X})^{-1}{\bm{R}^{'}_1}]^{-1}$, \, $\bm{S}=\bm{X}'\bm{X}$,  $\bm{G}_{3n}=\bm{S}^{-1}{\bm{R}^{'}_1}[\bm{R}_{1}\bm{S}^{-1}{\bm{R}^{'}_1}]^{-1}$,\\
$\bm{G}_{4n}={\bm{R}^{'}_1}[\bm{R}_{1}{\bm{R}^{'}_1}]^{-1}$, \,
$\bm{P}_{n}=(\bm{R}_{2}^{'}\bm{R}_{2})^{-1}\bm{R}_{2}^{'}$.
%\end{eqnarray}
Then, since
$\bm{R}_{1}\bm{B}\bm{R}_2=\bm{\theta}+\bm{\theta}_{0}/\sqrt{n}$, we
have
\begin{eqnarray*}
n^{\frac{1}{2}}(\hat{\bm{B}}_2-\bm{B})=n^{\frac{1}{2}}(\hat{\bm{B}}_1-\bm{B})-n^{\frac{1}{2}}\bm{G}_{2n}\bm{R}_{1}(\hat{\bm{B}}_1-\bm{B})\bm{R}_{2}\bm{P}_{n}
+\bm{G}_{2n}\theta_{0}\bm{P}_{n}\\
n^{\frac{1}{2}}(\hat{\bm{B}}_3-\bm{B})=n^{\frac{1}{2}}(\hat{\bm{B}}_1-\bm{B})-n^{\frac{1}{2}}\bm{G}_{3n}\bm{R}_{1}(\hat{\bm{B}}_1-\bm{B})\bm{R}_{2}\bm{P}_{n}
+\bm{G}_{3n}\theta_{0}\bm{P}_{n}. \\
n^{\frac{1}{2}}(\hat{\bm{B}}_4-\bm{B})=n^{\frac{1}{2}}(\hat{\bm{B}}_1-\bm{B})-n^{\frac{1}{2}}\bm{G}_{4n}\bm{R}_{1}(\hat{\bm{B}}_1-\bm{B})\bm{R}_{2}\bm{P}_{n}
+\bm{G}_{4n}\theta_{0}\bm{P}_{n}.
\end{eqnarray*}
Therefore,
\begin{eqnarray*}
\left(
  \begin{array}{c}
    n^{\frac{1}{2}}(\hat{\bm{B}}_1-\bm{B}) \\
    n^{\frac{1}{2}}(\hat{\bm{B}}_2-\bm{B}) \\
    n^{\frac{1}{2}}(\hat{\bm{B}}_3-\bm{B}) \\
    n^{\frac{1}{2}}(\hat{\bm{B}}_4-\bm{B}) \\
  \end{array}
\right)=\left(
          \begin{array}{c}
            n^{\frac{1}{2}}(\hat{\bm{B}}_1-\bm{B}) \\
            n^{\frac{1}{2}}(\hat{\bm{B}}_1-\bm{B})-\bm{G}_{2n}\bm{R}_{1}\left(n^{\frac{1}{2}}(\hat{\bm{B}}_1-\bm{B})\right)\bm{R}_{2}\bm{P}_{n}
+\bm{G}_{2n}\theta_{0}\bm{P}_{n} \\
            n^{\frac{1}{2}}(\hat{\bm{B}}_1-\bm{B})-\bm{G}_{3n}\bm{R}_{1}\left(n^{\frac{1}{2}}(\hat{\bm{B}}_1-\bm{B})\right)\bm{R}_{2}\bm{P}_{n}
+\bm{G}_{3n}\theta_{0}\bm{P}_{n} \\
            n^{\frac{1}{2}}(\hat{\bm{B}}_1-\bm{B})-\bm{G}_{4n}\bm{R}_{1}\left(n^{\frac{1}{2}}(\hat{\bm{B}}_1-\bm{B})\right)\bm{R}_{2}\bm{P}_{n}
+\bm{G}_{4n}\theta_{0}\bm{P}_{n} \\
          \end{array}
        \right),
\end{eqnarray*}
with  $n^{\frac{1}{2}}(\hat{\bm{B}}_1-\bm{B})
\xrightarrow[n\rightarrow\infty]{d}\eta_1\sim \mathcal{N}_{p \times
q}\left(\bm{O}, \bm{\Sigma}_{11}\right)$,\\ $\bm{G}_{2n}
\xrightarrow[n\rightarrow\infty]{P}\bm{G}_{2}=(\bm{\Sigma}
\bm{K})^{-1}\bm{R}^{'}_1(\bm{R}_{1}(\bm{\Sigma}
\bm{K})^{-1}\bm{R}^{'}_1)^{-1}$, $\bm{G}_{3n}
\xrightarrow[n\rightarrow\infty]{P}\bm{G}_{3}=\bm{\Sigma}^{-1}\bm{R}^{'}_1(\bm{R}_{1}\bm{\Sigma}^{-1}\bm{R}^{'}_1)^{-1}$,
$\bm{G}_{4n}
\xrightarrow[n\rightarrow\infty]{P}\bm{G}_{4}=\bm{R}^{'}_1(\bm{R}_{1}
\bm{R}^{'}_1)^{-1}$, $\bm{P}_{n}
\xrightarrow[n\rightarrow\infty]{P}\bm{P}=(\bm{R}_{2}^{'}\bm{R}_{2})^{-1}\bm{R}_{2}^{'}$.
Therefore, by using Lemma~\ref{lemmajointnorm}, we get the statement
of the proposition.
\end{proof}

%\end{proof}
\begin{proof}[Proof of Theorem~\ref{propADR}]
 The first statement follows from Theorem~\ref{propjointclassloc}. Further, we have,
\begin{eqnarray*}
\text{ADR}\left(\tilde{\bm{B}}(\hat{\bm{\Sigma}}),\bm{B},W\right)=\text{tr}\left[(W\otimes \bm{I}_{q})\textrm{E}\left[\textrm{vec}(\bm{\eta}^{*})\left(\textrm{vec}(\bm{\eta}^{*})\right)'\right]\right]\\
=\text{tr}((W\otimes \bm{I}_{q})
(\bm{\Sigma}_{22}(\bm{Q}_{0})))+\text{tr}\left(\bm{\mu}'(\bm{Q}_{0})W\bm{\mu}(\bm{Q}_{0})\right).
\end{eqnarray*}
This gives
\begin{eqnarray*}
\text{ADR}(\tilde{\bm{B}}(\hat{\bm{\Sigma}}),\bm{B},W)=\text{ADR}(\hat{\bm{B}}_1,\bm{B};W)-\text{tr}((W\otimes \bm{I}_{q})(A_1\Lambda(J^{'}_1(\bm{Q}_{0})\otimes J))\\
-\text{tr}((W\otimes \bm{I}_{q})((\bm{J}_1(\bm{Q}_{0})\otimes \bm{J})\bm{\Lambda}  \bm{A}'_{1})+\text{tr}((W\otimes \bm{I}_{q})((\bm{J}_1(\bm{Q}_{0})\otimes \bm{J})\bm{\Lambda}(J^{'}_1(\bm{Q}_{0})\otimes J)))\\
+\text{tr}((\bm{C}_4\bm{C}^{'}_4)\otimes(\bm{C}^{'}_3(\bm{Q}_{0})W\bm{C}_3(\bm{Q}_{0}))\text{vec}(\bm{\theta}_0)(\text{vec}(\bm{\theta}_0))^{'}).
\end{eqnarray*}
Further, one can verify that
%\begin{eqnarray}
$\text{vec}(\bm{J}_1(\bm{Q}_{0})\otimes \bm{J})=\text{vec}((\Sigma
K)^{-1}\otimes \bm{I}_q)$. Then,
%\end{eqnarray}
the rest of the proof follows from some algebraic computations.
\end{proof}
\begin{proof}[Proof of Theorem~\ref{compareADR}]
From Theorem~\ref{propADR}, we have %\\
\begin{eqnarray}
\text{ADR}(\tilde{\bm{B}}(\hat{\bm{\Sigma}}),\bm{B},W)
=\text{ADR}(\hat{\bm{B}}_1,\bm{B},W)-f_1(\bm{Q}_{0})+(\text{vec}(\bm{\theta}_0))^{'}\bm{F}_1(\bm{Q}_{0})\text{vec}(\bm{\theta}_0).
\label{prelADR}
\end{eqnarray}
Note that $f_1(\bm{Q}_{0})\geqslant0$ and obviously, if
$f_1(\bm{Q}_{0})=0$, ADR$(\tilde{\bm{B}}(\hat{\bm{\Sigma}}), B;
W)>$ADR$(\hat{\bm{B}}_1, \bm{B},W)$ provided that
$(\text{vec}(\bm{\theta}_0))^{'}\bm{F}_1(\bm{Q}_{0})\text{vec}(\bm{\theta}_0)>0$.
Thus, we only consider the case where $f_1(\bm{Q}_{0})>0$.
From~\eqref{prelADR}, ADR$(\tilde{\bm{B}}(\hat{\bm{\Sigma}}), B;
W)>$ADR$(\hat{\bm{B}}_1, \bm{B},W)$ if and only if\\
$-f_1(\bm{Q}_{0})+(\text{vec}(\bm{\theta}_0))^{'}\bm{F}_1(\bm{Q}_{0})\text{vec}(\bm{\theta}_0)>0$.
If
$f_1(\bm{Q}_{0})<(\text{vec}(\bm{\theta}_0))^{'}\bm{F}_1(\bm{Q}_{0})\text{vec}(\bm{\theta}_0),$
we have
$$\frac{f_1(\bm{Q}_{0})}{(\text{vec}(\bm{\theta}_0))^{'}\bm{F}_1(\bm{Q}_{0})\text{vec}(\bm{\theta}_0)}<1, \, \mbox{
and } \,
(\text{vec}(\bm{\theta}_0))^{'}\text{vec}(\bm{\theta}_0)\displaystyle\frac{f_1(\bm{Q}_{0})}{(\text{vec}(\bm{\theta}_0))^{'}
\bm{F}_1(\bm{Q}_{0})\text{vec}(\bm{\theta}_0)}<||\bm{\theta}_0||^2.
$$
Further, since $f_1(\bm{Q}_{0})>0$, we have
\begin{equation}\label{prelADRbis}
\displaystyle\frac{(\text{vec}(\bm{\theta}_0))^{'}\bm{F}_1(\bm{Q}_{0})\text{vec}(\bm{\theta}_0)}{(\text{vec}(\bm{\theta}_0))^{'}\text{vec}(\bm{\theta}_0)f_1}>\frac{1}{||\bm{\theta}_0||^2}.
\end{equation}
Further, by using Courant Theorem, we have
\begin{equation} \label{courant}
\displaystyle
\textrm{ch}_{min}(\bm{F}_1(\bm{Q}_{0}))<\displaystyle\frac{(\text{vec}(\bm{\theta}_0))^{'}\bm{F}_1(\bm{Q}_{0})\text{vec}(\bm{\theta}_0)}
{(\text{vec}(\bm{\theta}_0))^{'}\text{vec}(\bm{\theta}_0)}<\displaystyle
\textrm{ch}_{max}(\bm{F}_1(\bm{Q}_{0})).
\end{equation}
Therefore, for the inequality in (\ref{prelADRbis}) to hold, it
suffices to have
$$\frac{1}{||\bm{\theta}_0||^2}<\frac{\textrm{ch}_{min}(\bm{F}_1(\bm{Q}_{0}))}{f_1(\bm{Q}_{0})}.$$
That is if
$||\bm{\theta}_0||^2>\displaystyle\frac{f_1(\bm{Q}_{0})}{\textrm{ch}_{min}(\bm{F}_1(\bm{Q}_{0}))}$,
we have
$\text{ADR}(\tilde{\bm{B}}(\hat{\bm{\Sigma}}),\bm{B},W)>\text{ADR}(\hat{\bm{B}}_1,\bm{B};W).$
Further, if
$f_1(\bm{Q}_{0})>(\text{vec}(\bm{\theta}_0))^{'}\bm{F}_1(\bm{Q}_{0})\text{vec}(\bm{\theta}_0)$,
by using (\ref{courant}), we establish the condition that %
if
$||\bm{\theta}_0||^2<\displaystyle\frac{f_1(\bm{Q}_{0})}{\textrm{ch}_{max}(\bm{F}_1(\bm{Q}_{0}))}$,
then
$\text{ADR}(\tilde{\bm{B}}(\hat{\bm{\Sigma}}),\bm{B},W)<\text{ADR}(\hat{\bm{B}}_1,\bm{B};W),$
this completes the proof.
\end{proof}

% % % % % % % % % % % % % % % % % % % % % % % % %V A % % % % % % % % % % % % % % % % % % % %

\end{document}